\documentclass{article}
\usepackage[utf8]{inputenc}

\usepackage{algorithm}
\usepackage{algorithmic}
\usepackage{amsmath}
\usepackage{amssymb} 
\usepackage{amsfonts}
\usepackage{amsthm}
\usepackage{arydshln}
\usepackage{authblk}

\usepackage{bbold}
\usepackage{bm}
\usepackage{booktabs}
\usepackage{bookmark}
\usepackage{calc}              
\usepackage{cleveref}
\usepackage{enumitem}
\usepackage{extarrows}
\usepackage{float}
\usepackage{geometry}
\usepackage{graphicx}          
\usepackage{indentfirst}
\usepackage{latexsym}
\usepackage{mathtools}
\usepackage{multirow}          
\usepackage{subcaption}
\usepackage{tikz}              

\newtheorem{lemma}{Lemma}
\newtheorem{theorem}{Theorem}

\usepackage{authblk}

\title{Adaptive multiscale model reduction for linear elasticity equation in perforated domains}

\author[a,b]{Wei Xie}
\author[b]{Eric Chung}
\author[c]{Yin Yang\thanks{Corresponding author}}
\author[c]{Yunqing Huang}
\affil[a]{Hunan Key Laboratory for Computation and Simulation in Science and Engineering, National Center for Applied Mathematics in Hunan, Xiangtan University, Xiangtan 411105, Hunan, China}
\affil[b]{Department of Mathematics, The Chinese University of Hong Kong, Shatin, Hong Kong SAR, China}
\affil[c]{Hunan Research Center of the Basic Discipline Fundamental Algorithmic Theory and Novel Computational Methods, Key Laboratory of Intelligent Computing and Information Processing of Ministry of Education, Xiangtan University, Xiangtan 411105, Hunan, China}

\date{}

\begin{document}
\maketitle

\renewcommand{\thefootnote}{}
\footnotetext{E-mail addresses: xiew@smail.xtu.edu.cn (Wei Xie), 
tschung@math.cuhk.edu.hk (Eric Chung), 
yangyinxtu@xtu.edu.cn (Yin Yang), 
huangyq@xtu.edu.cn (Yunqing Huang)}
\renewcommand{\thefootnote}{\arabic{footnote}}

\maketitle

\begin{abstract}
In this paper, we develop a Constraint Energy Minimizing Generalized Multiscale Finite Element Method (CEM-GMsFEM) for solving linear elasticity problems in heterogeneous perforated domains. The presence of numerous perforations introduces multiple scales into the computational domain, making direct fine-grid simulations computationally expensive.
The proposed method follows the standard offline--online decomposition of CEM-GMsFEM. In the offline stage, local spectral problems are solved on coarse elements to construct auxiliary spaces, and localized energy-minimizing basis functions are then computed on oversampled regions to capture fine-scale geometric information induced by the perforations. In the online stage, residual-driven basis functions are constructed in enlarged coarse neighborhoods to incorporate source-term information and improve the accuracy of the multiscale approximation adaptively.
We establish convergence results for both the offline and online stages. In particular, we derive error estimates for the localized multiscale approximation and prove the convergence of the adaptive online enrichment algorithm. Moreover, we show that the oversampling regions used in the online stage can be determined locally, leading to a reduction in computational cost while maintaining convergence properties.
Numerical experiments on perforated media with different geometric configurations demonstrate the accuracy and efficiency of the proposed method.
\end{abstract}

\paragraph{keywords:}
Peforated domain, Multiscale model reduction, Linear elasticity

\section{Introduction} \label{sec:introduction}
Perforated domains arise in a wide range of applications, including porous media flow, composite materials, and subsurface modeling.
Linear elasticity models posed in perforated domains also serve as important components in the simulation of deformation processes in porous materials and fractured media \cite{mehmani2023multiscale, li2024multiscale}.
The presence of multiple perforations gives rise to complex multiscale features and effective heterogeneous behavior, which make direct numerical simulations computationally expensive due to the need for extremely fine meshes.
Therefore, developing efficient multiscale methods for perforated domains is of significant practical and theoretical interest.

One class of approaches seeks to derive effective macroscopic models that can be solved on coarse computational grids while preserving the essential influence of fine-scale heterogeneities.
These methods aim to reduce the computational complexity of direct simulations by replacing the original multiscale problem with an equivalent coarse-scale model.
Representative examples include homogenization methods \cite{allaire1991homogenizationCPAM, hornung1997homogenization, cao2003asymptotic}, which establish effective equations through asymptotic analysis,
the Heterogeneous Multiscale Method (HMM) \cite{weinan2003heterognous, ming2005analysis, henning2009heterogeneous}, which couples macroscopic solvers with localized microscopic simulations,
and multicontinuum homogenization methods \cite{efendiev2023multicontinuum, chung2024multicontinuum, xie2025multicontinuum, xie2026hierarchical, huang2026pinn}, which model multiple interacting continua to capture complex transport and mechanical behaviors across scales.

Another class of approaches constructs special multiscale basis functions.
Examples include the Generalized Finite Element Method (GFEM) \cite{babuska1983generalized, babuska1994special}
and the Multiscale Finite Element Method (MsFEM) \cite{hou1997multiscale, huang2001partition, le2014msfem, bris2019multiscale}.
The Generalized Multiscale Finite Element Method (GMsFEM) \cite{efendiev2013generalized, efendiev2013generalized, chung2016generalized, chung2017conservative, spiridonov2019generalized, xie2025multiscale, xie2025residual} enriches the multiscale approximation space by solving local spectral problems based on the MsFEM framework.
Similar spectral constructions have also been explored in the Multiscale Spectral Generalized Finite Element Method (MS-GFEM) \cite{ma2022novel, benezech2024scalable},
the Wavelet-Based Edge Multiscale Finite Element Method (WEMsFEM) \cite{fu2019edge, fu2021wavelet},
and Localized Subspace Iteration methods (LSI) \cite{guan2025localized, chung2026decoupling}.
The Localized Orthogonal Decomposition (LOD) method \cite{maalqvist2014localization, brown2016multiscale} decomposes the solution space into two orthogonal subspaces and establishes convergence with respect to the coarse mesh size.
The Constraint Energy Minimizing Generalized Multiscale Finite Element Method (CEM-GMsFEM) \cite{chung2018constraint, fu2020constraint, chung2021convergence, wang2023local, wang2024multiscale, xie2024cem, wang2025local, galvis2017overlapping} combines the main ideas of GMsFEM and LOD.
By constructing multiple multiscale basis functions in each coarse region, CEM-GMsFEM is able to capture complex local heterogeneities more accurately.
To further improve the approximation quality, online basis functions \cite{chung2018fast, fu2018constraint} are introduced to incorporate residual and source information, leading to faster convergence.

In this paper, we employ the CEM-GMsFEM framework to solve linear elasticity problems in perforated domains.
The computational procedure consists of offline and online stages.
In the first step, we solve local eigenvalue problems in each coarse block to obtain a collection of eigenfunctions that form the auxiliary space.
Subsequently, local energy minimization problems associated with the auxiliary space are solved to construct the offline multiscale basis functions.
In the online stage, residual-driven basis functions are constructed in oversampled coarse neighborhoods to incorporate source information.
The multiscale approximation space is then enriched by incorporating the newly constructed online basis functions.
In addition, we develop an adaptive enrichment algorithm that selects local subdomains for online basis construction based on local residual indicators.
We present convergence results for both the offline and online stages.
The flexibility of the oversampling layers for the offline basis functions was established in \cite{xie2024cem}.
In this work, we further prove the flexibility of the oversampling layers for the online basis functions, showing that the required oversampling regions can be determined using only local information, thereby reducing the computational cost.

The remainder of this paper is organized as follows.
In Section \ref{sec:preliminaries}, we introduce the preliminary concepts and notations.
The offline and online stages are presented in Sections \ref{sec:offlinebasis} and \ref{sec:onlinebasis}, respectively.
In Section \ref{sec:analysis}, we establish the offline convergence theorem and analyze the variability of oversampling layers in the online stage.
Numerical experiments are presented in Section \ref{sec:numericalexamples} to demonstrate the effectiveness of the proposed method.
Finally, conclusions are given in Section \ref{sec:conclusions}.

\section{Preliminaries} \label{sec:preliminaries}

The perforated domain $\Omega^{\epsilon}$ is obtained by removing from a bounded domain $\Omega \subset \mathbb{R}^d$ $(d=2,3)$ a collection of randomly distributed perforations.
The union of all perforations is denoted by $\mathcal{B}^{\epsilon}$ (see Figure \ref{fig:perforateddomain_example}).
We assume that the characteristic size of the perforations is much smaller than the diameter of the computational domain, namely, $0 < \epsilon \ll \mathrm{diam}(\Omega)$.

We consider the linear elasticity problem in the perforated domain $\Omega^{\epsilon}$:

\begin{equation}
\begin{cases}
\begin{aligned}
- \nabla \cdot \boldsymbol{\sigma}(u) &= f, &\text{in}~ \Omega^{\epsilon}, \\
\boldsymbol{\sigma}(u) &=  2\mu \boldsymbol{\varepsilon}(u) + \lambda \nabla \cdot u \mathcal{I}, 
&\text{in}~ \Omega^{\epsilon}, \\
\boldsymbol{\varepsilon}(u) &=  \frac{1}{2} \left( \nabla u + \nabla u^T \right), 
&\text{in}~ \Omega^{\epsilon}, \\
\boldsymbol{\sigma}(u) \cdot \Vec{\boldsymbol{n}} &= 0, &\text{on}~ \partial \mathcal{B}^{\epsilon} \cap \partial \Omega^{\epsilon}, \\
u &= 0, &\text{on}~ \partial \Omega \cap \partial \Omega^{\epsilon},
\end{aligned}
\end{cases}
\label{eq:pde}
\end{equation}
where $u$ denotes the displacement vector field, while $\boldsymbol{\sigma}$ and $\boldsymbol{\varepsilon}$ represent the stress and strain tensors, respectively.
The parameters $\lambda > 0$ and $\mu > 0$ are the Lam\'e coefficients, $\mathcal{I}$ denotes the identity tensor, and $\Vec{\boldsymbol{n}}$ is the outward unit normal vector.

Define the space
$V \coloneqq \{v\in [H^1(\Omega^{\epsilon})]^d : 
v|_{\partial \Omega^{\epsilon} \cap \partial \Omega}=0\}$, then the weak formulation of \eqref{eq:pde} is to find a solution $u \in V$ satisfied 
\begin{equation}
a(u, v) = (f, v), \quad \forall v \in V,
\label{eq:perforated_variation}
\end{equation}
where
\[
a(u, v) = \int_{\Omega^{\epsilon}} 
2\mu \boldsymbol{\varepsilon}(u) : \boldsymbol{\varepsilon}(v) + \lambda \nabla \cdot u \nabla \cdot v, \quad
(f, v) = \int_{\Omega^{\epsilon}} fv.
\]

\begin{figure}[htbp]
\centering
\begin{tikzpicture}
\draw[fill=gray!60] (0, 0) rectangle (6, 4);
\draw[fill=white] (0.2, 3.3) rectangle (0.8, 3.9);
\draw[fill=white] (2.5, 2) ellipse (0.34 and 0.2);
\draw[fill=white] (1.5, 1) ellipse (0.2 and 0.3);
\draw[fill=white] (3.5, 3) circle (0.5);
\draw[fill=white] (1.2, 2) circle (0.3);
\draw[fill=white] (5, 2) circle (0.4);
\draw[fill=white] (4, 1) circle (0.4);
\draw[fill=white] (4.3, 3) circle (0.1);
\draw[fill=white] (1.5, 2.7) circle (0.1);
\draw[fill=white] (2.5, 3) circle (0.1);
\draw[fill=white] (2.6, 1) circle (0.1);
\draw[fill=white] (0.7, 0.7) circle (0.1);
\draw[fill=white] (5.6, 1) circle (0.1);
\draw[fill=white] (5.2, 3.6) circle (0.1);
\node at (0.5,3.5)   {$\Omega^{\epsilon}$};
\end{tikzpicture}
\caption{Illustration of a perforated domain.}
\label{fig:perforateddomain_example}
\end{figure}

We use $h$ to denote the fine mesh size that fully resolves the geometric heterogeneities induced by the perforations $\mathcal{B}^{\epsilon}$.
The fine-scale finite element space $V_h$ is defined on a conforming triangular mesh using piecewise linear finite elements ($P_1$ elements).
Unless otherwise specified, we omit the subscript $h$ for the fine-scale solution $u_h$ and the finite element space $V_h$ for notational simplicity.

Since the algebraic system associated with the fine mesh size $h$ is computationally expensive to solve, we employ the CEM-GMsFEM framework to construct multiscale basis functions that capture the essential local heterogeneities and enable efficient computations on a coarse grid.

For MsFEM and its generalized variants, the key ingredient is the construction of a suitable low-dimensional multiscale approximation space, denoted by $V_{\textnormal{ms}} \subset V$.
The corresponding multiscale formulation reads: find $u_{\textnormal{ms}} \in V_{\textnormal{ms}}$ such that
\begin{equation}
a(u_{\textnormal{ms}}, v) = (f, v), 
\quad \forall v \in V_{\textnormal{ms}}.
\label{eq:perforated_galerkin_ms}
\end{equation}

In Sections \ref{sec:offlinebasis} and \ref{sec:onlinebasis}, we introduce the construction of the offline and online multiscale spaces in the CEM-GMsFEM framework.
Before presenting these constructions, we first describe the coarse and fine meshes used in the multiscale discretization.
Let $\mathcal{T}^H$ be a coarse partition of the perforated domain $\Omega^{\epsilon}$.
The coarse mesh is used for the multiscale computation and does not need to resolve the perforation scale $\epsilon$.
Based on $\mathcal{T}^H$, we further refine each coarse element to obtain a conforming fine mesh $\mathcal{T}^h$.
We assume that $0 < h < \epsilon \ll H < \mathrm{diam}(\Omega)$.
We use $K$ to denote a coarse element and $\omega$ to denote a coarse neighborhood.
Let $N_c$ be the total number of coarse elements and $N_v$ the total number of interior coarse vertices of $\mathcal{T}^H$.
For the offline stage, the local problems for constructing offline basis functions are solved in the oversampled region $K_{i,k_i}$, which is obtained by enlarging the coarse block $K_i$ by $k_i$ coarse layers.
Similarly, for the online stage, the local problems for constructing online basis functions are solved in the oversampled region $\omega_{i,m_i}$, which is obtained by enlarging the coarse neighborhood $\omega_i$ by $m_i$ coarse layers.
In Figure \ref{pic:perforateddomain_mesh_example}, we illustrate the fine mesh, coarse mesh, and corresponding oversampling regions used in the multiscale construction.

\begin{figure}[htbp]
\centering
\begin{tikzpicture}
\centering
\node[anchor=south west,inner sep=0] at (0,0) {\includegraphics[height=3.6cm]{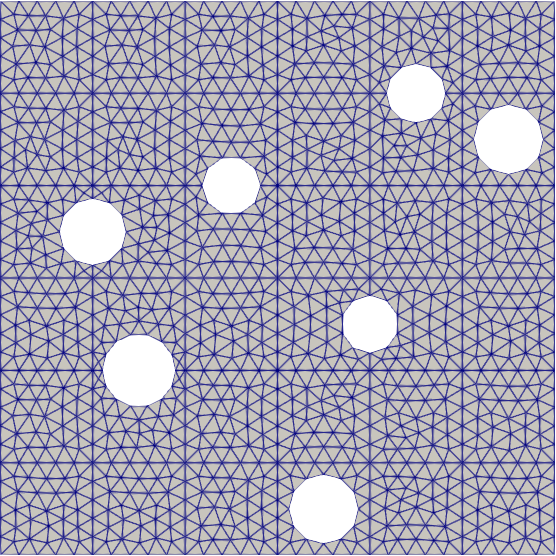}};
\node[anchor=south west,inner sep=0] at (4,0) {\includegraphics[height=3.6cm]{figs/6by6.png}};
\draw[red, line width=1.5pt] (0,0) grid[step=0.6] (3.6,3.6);
\draw[red, line width=1.5pt, shift={(4,0)}] (0,0) grid[step=0.6] (3.6,3.6);
\draw[yellow, line width=1.5pt] (1.2,1.2) rectangle (1.8,1.8);
\draw[green, line width=1.5pt] (0.6,0.6) rectangle (2.4,2.4);
\draw[blue, line width=1.5pt] (5.2,1.2) rectangle (6.4,2.4);
\draw[teal, line width=1.5pt] (4.6,0.6) rectangle (7,3);
\node at (-0.5,1.5)  {$K_i$};
\node at (-1.5,1.5)  {$K_{i,1}$};
\node at (8.2,1.8)  {$\omega_i$};
\node at (9.2,1.8)  {$\omega_{i,1}$};
\draw[line width=1pt] (-1.5,1.25) -- (0.6,0.6);
\draw[line width=1pt] (-1.5,1.75) -- (0.6,2.4);
\draw[line width=1pt] (-0.45,1.25) -- (1.2,1.2);
\draw[line width=1pt] (-0.42,1.75) -- (1.2,1.8);
\draw[line width=1pt] (6.4,1.2) -- (8.2,1.6);
\draw[line width=1pt] (6.4,2.4) -- (8.2,2); 
\draw[line width=1pt] (7,0.6) -- (9.2,1.6);
\draw[line width=1pt] (7,3) -- (9.2,2.1);
\end{tikzpicture} 
\caption{Illustration of the coarse grid, the fine-grid, the oversampling domain.}
\label{pic:perforateddomain_mesh_example}
\end{figure}

\section{Offline basis functions} \label{sec:offlinebasis}
In this section, we present the offline stage of the CEM-GMsFEM for problem \eqref{eq:pde}.
The construction of the offline multiscale space consists of two steps.
First, we construct the auxiliary multiscale space by solving a local spectral problem in each coarse block $K_i$ and selecting the eigenfunctions corresponding to the smallest $l_i$ eigenvalues.
Next, for each oversampled region $K_{i,k_i}$, obtained by enlarging $K_i$ by $k_i$ coarse-grid layers, we solve a constrained energy minimization problem to construct the offline multiscale basis functions.

\subsection{Auxiliary space}

We begin by constructing the auxiliary basis functions in each coarse block $K_i$.
To this end, we define the local bilinear forms $a_i(\cdot,\cdot)$ and $s_i(\cdot,\cdot)$ on $K_i$ by
\begin{equation}
a_i(v, w)=\int_{K_i} 2\mu \boldsymbol{\varepsilon}(v) : \boldsymbol{\varepsilon}(w) + \lambda \nabla \cdot v \nabla \cdot w,
\quad 
s_i(v,w)=\int_{K_i} \tilde{\kappa}vw.
\label{eq:as_innerproduct_define}
\end{equation}
Here, $\tilde{\kappa}= \sum_{j=1}^{N_v} |\nabla \chi_j|^2$, and
$\{\chi_j\}_{j=1}^{N_v}$ denotes a partition of unity on the coarse grid
\cite{babuvska1997partition}.
Typically, the partition of unity functions satisfy
$|\nabla \chi_j| = O(H^{-1})$ and
$0 \le \chi_j \le 1$.
For computational simplicity, we employ standard Lagrange basis functions.

For each coarse block $K_i$, we consider the following spectral problem:
\begin{equation}
a_i(\phi_j^i, v) = \lambda_j^i s_i(\phi_j^i, v),
\quad \forall v \in [H^1(K_i)]^d.
\label{eq:aux_weak}
\end{equation}

We can arrange the eigenvalues in ascending order such that
\[
\lambda_1^i \le \lambda_2^i \le \cdots \le \lambda_{l_i}^i \le \cdots,
\]
for each $i\in \{1,2,\cdots,N_c\}$.
By selecting the $l_i$ eigenfunctions corresponding to the first $l_i$ eigenvalues, we construct the local auxiliary space $V_{\textnormal{aux}}^i$.
Since the local auxiliary spaces have disjoint supports, we define the global auxiliary space as $V_{\textnormal{aux}} = \oplus_i V_{\textnormal{aux}}^i$.

It is worth noting that $\{\phi_j^i\}$ forms a set of unit orthogonal basis functions with respect to $\Vert \cdot \Vert_s^2 = \sum_{i=1}^{N_c} s_i(\cdot, \cdot)$. We can also define the global inner product as $s(u,v) = \sum_{i=1}^{N_c} s_i(u, v)$. Given a function $\phi_j^i \in V_{\text{aux}}$, we say that a function $\psi$ is $\phi_j^i\text{-orthogonal}$ if
\[
s(\psi, \phi_j^i)=1, \quad s(\psi, \phi_{j'}^{i'})=0, ~ \text{if } j'\ne j \text{ or } i'\ne i. 
\]
Additionally, we define a global operator $\pi : V \to V_{\text{aux}}$ as follows:
\[
\pi(u) = \sum_{i=1}^{N_c} \sum_{j=1}^{l_i} \frac{s_i(u, \phi_j^i)}{s_i(\phi_j^i, \phi_j^i)}\phi_j^i, \quad \forall u \in V. 
\]
Specifically, we can define the kernel of the projection $\pi$ as $\tilde{V} = \{v \in V ~|~ \pi(v)=0\}$.

\subsection{Offline space}

We now introduce two types of offline basis functions constructed in oversampled regions. For each coarse block $K_i$, let $K_{i,k_i}$ denote the oversampled region obtained by enlarging $K_i$ by $k_i$ coarse-grid layers (see Figure \ref{pic:perforateddomain_mesh_example}). For each auxiliary basis function $\phi_j^i \in V_{\text{aux}}^i$, we define $\psi_{j,\textnormal{ms}}^i$ as the solution of the following constrained energy minimization problem:
\begin{equation}
\psi_{j,\textnormal{ms}}^i = \mathrm{argmin}
\{ a(\psi,\psi)+s(\pi(\psi)-\phi_j^i, \pi(\psi)-\phi_j^i)~|~
\psi \in V_0(K_{i,k_i})\}.
\label{eq:r_mini}
\end{equation}
where $V_0(K_{i,k_i}) = \{v \in [H^1(K_{i,k_i})]^d ~|~
v=0 \text{ on } \partial K_{i,k_i} / (\partial K_{i,k_i}
\cap \partial \mathcal{B}^{\epsilon}) \}$.

The weak formulation of \eqref{eq:r_mini} is given by
\begin{equation}
a(\psi_{j,\textnormal{ms}}^i, v) + s(\pi(\psi_{j,\textnormal{ms}}^i), \pi(v)) =
s(\phi_j^i, \pi(v)), \quad
\forall v \in V_0(K_{i,k_i}).
\label{eq:r_variation}
\end{equation}

Next, we introduce another type of offline basis function. For each auxiliary basis function $\phi_j^i$, the corresponding multiscale basis function is defined as the solution of the following constrained energy minimization problem:
\begin{equation}
\psi_{j,\textnormal{ms}}^i = \mathrm{argmin}
\{a(\psi,\psi) ~ | ~\psi \in V_0(K_{i,k_i}), ~
\psi ~\textnormal{is}~ \phi_j^i\textnormal{-orthogonal} \}.
\label{eq:c_mini}
\end{equation}
The weak formulation of \eqref{eq:c_mini} can be rewritten using Lagrange multipliers following \cite{fu2018constraint}. Specifically, we seek $(\psi_{j,\textnormal{ms}}^i, w) \in V_0(K_{i,k_i}) \times V_{\textnormal{aux}}(K_{i,k_i})$ such that:
\begin{equation}
\begin{cases}
\begin{aligned}
a(\psi_{j,\textnormal{ms}}^i, v) + s(v,w) &= 0,
\quad \forall v\in V_0(K_{i,k_i}), \\
s(\psi_{j,\textnormal{ms}}^i-\phi_j^i, q) &= 0,
\quad \forall q\in V_{\text{aux}}(K_{i,k_i}),
\label{eq:c_variation}
\end{aligned}
\end{cases}
\end{equation}
where $V_{\textnormal{aux}}(K_{i,k_i})$ denotes the collection of all local auxiliary spaces associated with the coarse blocks $K_j \subset K_{i,k_i}$.
To distinguish between the two formulations, we refer to \eqref{eq:r_mini} as the relaxed version and \eqref{eq:c_mini} as the constrained version.
We are now ready to define the CEM multiscale space,
\[
V_{\textnormal{ms}} = \mathop{\mathrm{span}}_{i,j} \{ \psi_{j,\textnormal{ms}}^i \}.
\]

Indeed, the aforementioned multiscale basis functions can be viewed as localized approximations of the corresponding global basis functions, which are obtained by solving the problems over the entire computational domain. The relaxed version of the global basis function is defined by
\begin{equation}
\psi_{j,\textnormal{glo}}^i = \mathrm{argmin}
\{ a(\psi,\psi)+s(\pi(\psi)-\phi_j^i, \pi(\psi)-\phi_j^i)~|~
\psi \in V\},
\label{eq:r_mini_global}
\end{equation}
which is equivalent to the following variational formulation:
\begin{equation}
a(\psi_{j,\textnormal{glo}}^i, v) + s(\pi(\psi_{j,\textnormal{glo}}^i), \pi(v)) = 
s(\phi_j^i, \pi(v)), \quad 
\forall v \in V.
\label{eq:r_variation_global}
\end{equation}

The constrained version of the global basis function is defined by
\begin{equation}
\psi_{j,\textnormal{glo}}^i = \mathrm{argmin} \{ a(\psi,\psi) ~|~
\psi \in V, \psi \text{ is }
\phi_j^i \text{-orthogonal}\}.
\label{eq:c_mini_global}
\end{equation}
We can similarly derive the following equivalent variational formulation:
\begin{equation}
\begin{cases}
\begin{aligned}
a(\psi_{j,\textnormal{glo}}^i, v) + s(v,w) &= 0, \quad \forall v\in V, \\
s(\psi_{j,\textnormal{glo}}^i-\phi_j^i, q) &= 0, \quad \forall q\in V_{\text{aux}}.
\label{eq:c_variation_global}
\end{aligned}
\end{cases}
\end{equation}

Hence, we define the corresponding global multiscale space $V_{\textnormal{glo}}$ by
\[
V_{\text{glo}} = \mathop{\mathrm{span}}_{i,j} \{ \psi_{j,\textnormal{glo}}^i \}.
\]

Based on the global basis functions, we obtain the following global multiscale approximation: find $u_{\textnormal{glo}} \in V_{\textnormal{glo}}$ such that
\begin{equation}
a(u_{\text{glo}}, v) = (f, v), \quad \forall v \in V_{\text{glo}}.
\label{eq:perforated_galerkin_glo}
\end{equation}

The global space $V_{\textnormal{glo}}$ possesses two key properties that are fundamental to the CEM framework.
First, according to \cite{chung2018constraint}, the global basis functions $\psi_{j,\textnormal{glo}}^i$ satisfy a spatial decay property. Leveraging this property, we can construct local problems to approximate the global basis functions and thereby achieve an efficient multiscale model reduction.
Second, the global space $V_{\textnormal{glo}}$ is orthogonal to the kernel of the projection operator $\pi$. This orthogonality follows from the fact that $a(\psi_{j,\textnormal{glo}}^i, v)=0$ for any $v \in \tilde{V}$. Combining this property with the fact that $\mathrm{dim}(V_{\textnormal{glo}}) = \mathrm{dim}(V_{\textnormal{aux}})$, we obtain $V = V_{\textnormal{glo}} \oplus \tilde{V}$.

\section{Online basis functions} \label{sec:onlinebasis}

In this section, we present the online stage of CEM-GMsFEM for problem \eqref{eq:pde}. 
There are two key features that distinguish it from the offline stage.
First, the source term is incorporated into the construction of the online basis functions.
Second, the oversampling domains are tailored to individual coarse neighborhoods.

To begin with, we introduce a residual functional $r : V \rightarrow \mathbb{R}$. Let $u_{\textnormal{ms}}$ denote the multiscale solution of \eqref{eq:perforated_galerkin_ms}. The residual is then defined by
\[
r(v) = a(u_{\textnormal{ms}}, v) - (f, v), \quad \forall v \in V.
\]
Subsequently, we define the local residual $r_i : V \rightarrow \mathbb{R}$ associated with each coarse neighborhood $\omega_i$ by
\[
r_i(v) = r(\chi_i v), \quad \forall v \in V.
\]
The definition of the local residual $r_i$ is combined by the partition of unity function. In that so, we can divide the global residual into different local regions. If we use the coarse block, then the function in coarse edge is not easy to distinguish.

The construction of the online basis functions is based on the local residual $r_i$. Furthermore, we define the operator norm by
\[
\Vert r_i \Vert_{a_{\omega_i}^*} = \mathrm{sup}_{v\in V_0(\omega_i)} 
\frac{|r(v)|}{\Vert v \Vert_a}.
\]
The definition of the local residual $r_i$ is based on the partition of unity functions ${\chi_i}$.
This construction allows the global residual to be decomposed into local contributions associated with different coarse neighborhoods.
The use of coarse neighborhoods is essential in the online stage.
If the residual were localized on coarse blocks, the contributions of degrees of freedom located on coarse-grid interfaces would not be naturally assigned to a unique coarse block.
By contrast, the partition of unity functions provide a natural localization mechanism and enable the residual to be distributed consistently among neighboring coarse regions.

For a selected coarse neighborhood $\omega_i$, we enlarge it by $m_i$ coarse-grid layers to obtain an oversampled domain $\omega_{i,m_i}$, which serves as the support of the corresponding online basis function.
Specifically, the online basis function $\beta_{\textnormal{ms}}^i$ is defined as the solution of the following problem:
\begin{equation}
a(\beta_{\textnormal{ms}}^i, v) + s(\pi(\beta_{\textnormal{ms}}^i), \pi(v)) = r_i(v), \quad \forall v \in V_0(\omega_{i,m_i}),
\label{eq:beta_omegai}
\end{equation}
where \eqref{eq:beta_omegai} can be viewed as a localized approximation of the corresponding global online basis function $\beta_{\textnormal{glo}}^i \in V$, which is defined by
\begin{equation}
a(\beta_{\textnormal{glo}}^i, v) + s(\pi(\beta_{\textnormal{glo}}^i), \pi(v)) = r_i(v), \quad \forall v \in V.
\label{eq:beta_glo}
\end{equation}

Once the online basis functions have been constructed, a new multiscale solution is obtained by solving the problem in the enriched multiscale space. The adaptive enrichment algorithm is summarized in Algorithm \ref{alg:cem_online_adap}.
The adaptive component of the enrichment process is controlled by the adaptivity parameter $\theta$, which determines the coarse neighborhoods selected for online enrichment. Specifically, online basis functions are constructed only in regions associated with the largest local residual indicators. Consequently, computational resources are concentrated in the regions that contribute most significantly to the approximation error.
The algorithm reduces to a uniform enrichment strategy when $\theta = 1$, in which online basis functions are added in all coarse neighborhoods. In contrast, no online basis functions are added when $\theta = 0$.

\begin{algorithm}[!ht]
\caption{Residual-driven online adaptive algorithm.}
\begin{algorithmic}[1]
\STATE \textbf{Input:} Initial multiscale space $V_{\textnormal{ms}}^0$, initial multiscale solution $u_{\textnormal{ms}}^0$, maximum number of iterations \texttt{Iter}, adaptivity parameter $\theta$, and oversampling layers $\{m_i\}$.
\FOR {$n \in \{1, 2, \cdots, \texttt{Iter}\}$}
\FOR {$i \in \{1, 2, \cdots, N_v\}$}
\STATE Compute the local residual indicator
\[
\delta_i = \sup\limits_{v\in V_0(\omega_i)}
\frac{|r_i(v)|}{\Vert v \Vert_a}.
\]
\ENDFOR
\STATE Reindex the coarse neighborhoods such that
$\delta_1 \ge \delta_2 \ge \cdots$.
Select the smallest integer $q$ such that
\[
\sum_{i=q+1}^{N_v} \delta_i^2
\le
\theta
\sum_{i=1}^{N_v} \delta_i^2.
\]
\FOR {$i \in \{1, 2, \cdots, q\}$}
\STATE Compute $\beta_{\textnormal{ms}}^i$ by solving
\[
a(\beta_{\textnormal{ms}}^i, v)
+
s(\pi(\beta_{\textnormal{ms}}^i), \pi(v))
=
r_i(v),
\quad
\forall v \in V_0(\omega_{i,m_i}).
\]
\ENDFOR
\STATE Enrich the multiscale space:
\[
V_{\textnormal{ms}}^{n+1}
=
V_{\textnormal{ms}}^n
\oplus
\mathop{\mathrm{span}}_{1\le i \le q}
\{\beta_{\textnormal{ms}}^i\}.
\]
\STATE Solve \eqref{eq:perforated_galerkin_ms} in the enriched multiscale space $V_{\textnormal{ms}}^{n+1}$ to obtain $u_{\textnormal{ms}}^{n+1}$.
\ENDFOR
\STATE \textbf{Output:} Multiscale solution $u_{\textnormal{ms}}^n$ and multiscale space $V_{\textnormal{ms}}^n$.
\end{algorithmic}
\label{alg:cem_online_adap}
\end{algorithm}

\section{Convergence results} \label{sec:analysis}

In this section, we present the convergence analysis for both the offline and online stages. We first introduce several norms and constants that will be used throughout the analysis.

The $L^2$-norm and the energy norm are defined by
\[
\Vert u \Vert^2=\int_{\Omega^{\epsilon}} u^2,
\qquad
\Vert u \Vert_a^2=\int_{\Omega^{\epsilon}}
2\mu \boldsymbol{\varepsilon}(u):\boldsymbol{\varepsilon}(u)
+\lambda (\nabla\cdot u)^2.
\]
We also define
\[
C_0=\sup_{v\in V}
\frac{\Vert \pi(v)\Vert_s^2}{\Vert v\Vert_a^2},
\]
which measures the stability of the projection operator $\pi$ with respect to the energy norm.

For any subdomain $\omega \subset \Omega^{\epsilon}$, let
\[
\Lambda_{\omega} =
\min_{K_i\cap\omega\neq\emptyset}
\lambda_{l_i+1}^i,
\qquad
\Gamma_{\omega} =
\max_{K_i\cap\omega\neq\emptyset}
\lambda_{l_i}^i.
\]
These quantities characterize the local spectral information of the auxiliary eigenvalue problems.

The convergence analysis of the offline multiscale space follows closely the framework developed in \cite{chung2018constraint, xie2024cem}. Since the main arguments are similar, we omit the technical details and only state the resulting error estimate. The following theorem unifies the convergence results for both the relaxed multiscale basis functions defined by \eqref{eq:r_mini} and the constraint multiscale basis functions defined by \eqref{eq:c_mini}.

\begin{theorem}
Let $u$ be the solution of \eqref{eq:perforated_variation} and let
$u_{\textnormal{ms}}$ be the offline multiscale solution of \eqref{eq:perforated_galerkin_ms}. Assume that the multiscale space $V_{\textnormal{ms}}$ is constructed using either the relaxed basis functions \eqref{eq:r_mini} or the constraint basis functions \eqref{eq:c_mini}. Then,
\[
\Vert u-u_{\textnormal{ms}}\Vert_a
\le
C_1 \Lambda_{\Omega^{\epsilon}}^{-\frac{1}{2}}
\Vert \tilde{\kappa}^{-\frac{1}{2}} f\Vert
+
C_2 E \Vert u_{\textnormal{glo}}\Vert_s,
\]
where $C_1$ and $C_2$ are positive constants, and $E$ denotes the exponential decay factor associated with the oversampling layers. The precise form of $E$ depends on whether the relaxed or constraint basis functions are employed.

Furthermore, if each oversampling parameter $k_i$ is sufficiently large and $\{\chi_i\}_{i=1}^{N_v}$ is a set of bilinear partition of unity functions, then
\[
\Vert u-u_{\textnormal{ms}} \Vert_a
\le
C H \Lambda_{\Omega^{\epsilon}}^{-\frac{1}{2}}
\Vert f\Vert.
\]
\end{theorem}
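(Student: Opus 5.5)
The proof splits the error with the triangle inequality,
\[
\Vert u-u_{\textnormal{ms}}\Vert_a \le \Vert u-u_{\textnormal{glo}}\Vert_a + \Vert u_{\textnormal{glo}}-u_{\textnormal{ms}}\Vert_a ,
\]
and bounds each term separately.

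\textbf{First term: global error.} We use the splitting $V=V_{\textnormal{glo}}\oplus\tilde V$ and the $a$-orthogonality of the two pieces. By Galerkin orthogonality, $u-u_{\textnormal{glo}}\in\tilde V$, so
\[
\Vert u-u_{\textnormal{glo}}\Vert_a^2=(f,u-u_{\textnormal{glo}}).
\]
Since $\pi(u-u_{\textnormal{glo}})=0$, the local spectral problem \eqref{eq:aux_weak} gives, for each $K_i$,
\[
\Vert v\Vert_{s(K_i)}^2\le (\lambda_{l_i+1}^i)^{-1}a_i(v,v) \quad\text{for } v\in\tilde V .
\]
Summing over $K_i$ yields $\Vert v\Vert_s\le\Lambda_{\Omega^\epsilon}^{-1/2}\Vert v\Vert_a$. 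Cauchy--Schwarz with the weight $\tilde\kappa$ then gives
\[
(f,v)\le\Vert\tilde\kappa^{-1/2}f\Vert\,\Vert v\Vert_s \le \Lambda_{\Omega^\epsilon}^{-1/2}\Vert\tilde\kappa^{-1/2}f\Vert\,\Vert v\Vert_a ,
\]
which is the $C_1$ term. The only elasticity-specific point is that $a_i$ is only semi-definite on $[H^1(K_i)]^d$: its kernel is the rigid motions, which have $\lambda=0$. This does no harm because those modes are among the first $l_i$ eigenfunctions, provided $l_i\ge\dim(\mathrm{RM})$. Near perforations we do not need a Korn inequality for this step.

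\textbf{Second term: localization error.} Write $u_{\textnormal{glo}}=\sum c_j^i\psi_{j,\textnormal{glo}}^i$ and test with the competitor $v=\sum c_j^i\psi_{j,\textnormal{ms}}^i\in V_{\textnormal{ms}}$. By Céa's lemma,
\[
\Vert u_{\textnormal{glo}}-u_{\textnormal{ms}}\Vert_a\le \Big\Vert\sum c_j^i(\psi_{j,\textnormal{glo}}^i-\psi_{j,\textnormal{ms}}^i)\Big\Vert_a .
\]
The core is the exponential decay estimate
\[
\Vert\psi_{j,\textnormal{glo}}^i-\psi_{j,\textnormal{ms}}^i\Vert_a\le E_i\Vert\phi_j^i\Vert_s .
\]
We prove it as in \cite{chung2018constraint}. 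Apply a cutoff function $\eta$ that vanishes on $K_{i,k-1}$ and equals one outside $K_{i,k}$. Combine the variational characterizations \eqref{eq:r_variation}/\eqref{eq:r_variation_global} (or their constrained analogues) with the spectral inequality on $\tilde V$. This gives a recursion
\[
\Vert\psi_{\textnormal{glo}}\Vert_{a(\Omega\setminus K_{i,k})}^2\le\rho\,\Vert\psi_{\textnormal{glo}}\Vert_{a(\Omega\setminus K_{i,k-1})}^2 ,
\]
with $\rho<1$ depending on $\Lambda$ and $C_0$. For the relaxed version, $\rho=(1+\Lambda^{1/2}/(2(1+C_0^{1/2})))^{-1}$ or similar. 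For the constrained version we additionally need a local inverse of $\pi$ with stability bounded by $\Gamma$ and $C_0$, so $E$ involves $\Gamma$ as well; this is why the form of $E$ differs between the two cases.

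Summing the individual errors requires an overlap argument. We bound
\[
\Big\Vert\sum_i w_i\Big\Vert_a^2\le C(k+1)^d\sum_i\Vert w_i\Vert_a^2
\]
using a second cutoff on $K_{i,k+1}$, and note $\sum|c_j^i|^2=\Vert\pi u_{\textnormal{glo}}\Vert_s^2$. Up to constants, this is $\Vert u_{\textnormal{glo}}\Vert_s$, which yields the $C_2E$ term. I expect the decay recursion to be the main obstacle. The cutoff argument requires estimates of $a(\eta v,\eta v)$ against $a(v,v)+\Vert v\Vert_s^2$; for elasticity this needs $\boldsymbol\varepsilon(\eta v)=\eta\boldsymbol\varepsilon(v)+\mathrm{sym}(\nabla\eta\otimes v)$ with $|\nabla\eta|^2\lesssim\tilde\kappa$, which still works in the perforated setting.

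\textbf{Final estimate.} For bilinear $\chi_i$ we have $\tilde\kappa\approx H^{-2}$, so
\[
\Vert\tilde\kappa^{-1/2}f\Vert\le CH\Vert f\Vert .
\]
Moreover,
\[
\Vert u_{\textnormal{glo}}\Vert_s\le C_0^{1/2}\Vert u_{\textnormal{glo}}\Vert_a\le C_0^{1/2}\Vert f\Vert_{V^*}\le C\Vert f\Vert ,
\]
where the last step uses a Korn–Poincaré constant on $V$, available through the Dirichlet boundary. Choosing $k_i=O(\log(H^{-1}))$ large enough makes $E\le H\Lambda^{-1/2}$, absorbing the second term into the first and giving the bound $CH\Lambda_{\Omega^\epsilon}^{-1/2}\Vert f\Vert$.
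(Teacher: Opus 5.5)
Your skeleton is the standard CEM-GMsFEM argument that the paper defers to; it omits the proof and cites \cite{chung2018constraint, xie2024cem}. Your global-error step is correct. One small caveat: in a perforated block $K_i$ may be disconnected, so the kernel of $a_i$ is the piecewise rigid motions on its components, and the real condition is $\lambda_{l_i+1}^i>0$. Three later steps are wrong as written.

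(i) C\'ea's lemma does not give $\Vert u_{\textnormal{glo}}-u_{\textnormal{ms}}\Vert_a\le\Vert u_{\textnormal{glo}}-w\Vert_a$ for $w\in V_{\textnormal{ms}}$. The reason is that $u_{\textnormal{ms}}$ is the Galerkin projection of $u$, not of $u_{\textnormal{glo}}$. Apply C\'ea to $u$ with $w=\sum c_j^i\psi_{j,\textnormal{ms}}^i$, and only then use the triangle inequality.

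(ii) The identity $\sum(c_j^i)^2=\Vert\pi(u_{\textnormal{glo}})\Vert_s^2$ uses $\pi(\psi_{j,\textnormal{glo}}^i)=\phi_j^i$, which holds only for the constrained basis. For the relaxed basis you need Lemma~\ref{lemma:vaux_v}. Test \eqref{eq:r_variation_global} with $q$ satisfying $\pi(q)=\sum c_j^i\phi_j^i$, then with $u_{\textnormal{glo}}$ itself. This gives $\sum(c_j^i)^2\le(1+D_{\Omega^{\epsilon}})^2\Vert\pi(u_{\textnormal{glo}})\Vert_s^2$; it is the same device as in Step 3 of the proof of Theorem~\ref{thm:cem:on}. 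So $\Gamma$ also enters the relaxed estimate, not only the constrained decay rate.

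Likewise, in the relaxed case both the decay recursion and the overlap summation must be run in $\Vert\cdot\Vert_a^2+\Vert\pi(\cdot)\Vert_s^2$, as in Lemmas~\ref{lemma:glolocal_r}--\ref{lemma:psi_gloms_recu}. Set $z_i=\sum_j c_j^i(\psi_{j,\textnormal{glo}}^i-\psi_{j,\textnormal{ms}}^i)$. For $v$ vanishing on $K_{i,k}$, what vanishes is $a(z_i,v)+s(\pi(z_i),\pi(v))$, not $a(z_i,v)$ alone. Also, $\pi(\psi_{j,\textnormal{glo}}^i)$ does not vanish outside $K_i$, so the spectral inequality applies only to $(I-\pi)\psi_{j,\textnormal{glo}}^i$ there.

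(iii) You treat $C_0$ as an $H$-independent constant, but $C_0\gtrsim H^{-2}$. To see this, take a fixed smooth $v$ and use $\Vert\pi(v)\Vert_s^2\ge\Vert v\Vert_s^2-\Lambda_{\Omega^{\epsilon}}^{-1}\Vert v\Vert_a^2$ together with $\tilde\kappa\sim H^{-2}$. A decay rate depending on $C_0$, such as your guessed $\rho$, would therefore tend to $1$ as $H\to0$ and destroy logarithmic oversampling. The correct rate depends only on $\Lambda$ (cf.\ the factor in Lemma~\ref{lemma:beta_decay}), plus $D$ in the constrained case.

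In your final step, $C_0^{1/2}$ bounds $\Vert\pi(u_{\textnormal{glo}})\Vert_s$, not $\Vert u_{\textnormal{glo}}\Vert_s$. The right bound uses Korn--Poincar\'e and $\Vert u_{\textnormal{glo}}\Vert_a\le\Vert u\Vert_a$:
\[
\Vert u_{\textnormal{glo}}\Vert_s^2\le\max\tilde\kappa\,\Vert u_{\textnormal{glo}}\Vert^2\le CH^{-2}\Vert f\Vert^2 .
\]
Hence $\Vert u_{\textnormal{glo}}\Vert_s=O(H^{-1}\Vert f\Vert)$, not $O(\Vert f\Vert)$. Absorbing the second term then requires $E=O(H^2)$, up to $\Lambda$ and the polylogarithmic overlap factor, rather than $E\le H\Lambda^{-1/2}$. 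This is still reachable with $k_i=O(\log H^{-1})$, but only because the decay rate is $H$-independent.
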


Before prove the adaptive online convergence, we will review some lemma that in \cite{xie2024cem, chung2018fast}.
For any $v \in V$, we define an enlarged support, denoted by
$\underline{\mathrm{supp}}(v)$, as
\[
\underline{\mathrm{supp}}(v) = \bigcup_{v|_{K_i} \ne 0} K_i.
\]

\begin{lemma} \label{lemma:vaux_v}
For any $v_{\textnormal{aux}} \in V_{\textnormal{aux}}$, there exists a function $v \in V$ such that
\[
\pi(v)=v_{\textnormal{aux}}, \quad
\Vert v \Vert_a^2 \le D_{\underline{\mathrm{supp}}(v)} \Vert v_{\textnormal{aux}} \Vert_s^2, \quad
\underline{\mathrm{supp}}(v) \subset \underline{\mathrm{supp}}(v_{\textnormal{aux}}),
\]
where $D_{\underline{\mathrm{supp}}(v)}=C_{\mathcal{T}}(1+\Gamma_{\underline{\mathrm{supp}}(v)})$, and $C_{\mathcal{T}}$ denotes the square of the maximum number of vertices among all coarse elements.
\end{lemma}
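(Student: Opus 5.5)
Since the auxiliary spaces $V_{\textnormal{aux}}^i$ have disjoint supports and $\pi$ acts blockwise, I will construct $v$ locally on each coarse block $K_i$ with $v_{\textnormal{aux}}|_{K_i}\neq 0$, and then sum the pieces. The plan is the standard CEM-GMsFEM construction of Chung et al.: use a constrained energy minimization posed on a single coarse element.

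\textbf{Step 1 (local construction).} Fix $K_i$ and write $v_{\textnormal{aux}}|_{K_i}=\sum_{j=1}^{l_i} c_j\phi_j^i$. On $K_i$ I solve
\[
\min\{\Vert w\Vert_{a(K_i)}^2 : w\in [H_0^1(K_i)]^d,\ s_i(w,\phi_j^i)=s_i(v_{\textnormal{aux}},\phi_j^i),\ j=1,\dots,l_i\}.
\]
Here $H_0^1(K_i)$ means zero trace on the parts of $\partial K_i$ that are not perforation boundary. The minimizer $v_i$ is extended by zero, so $v_i\in V$ and $\underline{\mathrm{supp}}(v_i)\subset K_i$. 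I then set $v=\sum_i v_i$. By orthonormality, $\pi(v_i)$ restricted to $K_i$ equals $v_{\textnormal{aux}}|_{K_i}$, and $\pi(v_i)$ vanishes on the other blocks. Hence $\pi(v)=v_{\textnormal{aux}}$ and $\underline{\mathrm{supp}}(v)\subset\underline{\mathrm{supp}}(v_{\textnormal{aux}})$.

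\textbf{Step 2 (energy bound for each piece).} A minimizer exists once the feasible set is nonempty, so it suffices to exhibit one competitor with the right energy. The natural candidate is a bubble-type cutoff of $v_{\textnormal{aux}}$, for example $\tilde w=\big(\prod_{x_k\in K_i}\chi_k\big)v_{\textnormal{aux}}$ or a similar product of partition of unity functions vanishing on $\partial K_i$. Its energy is estimated via $\varepsilon(\chi u)=\chi\varepsilon(u)+\tfrac12(\nabla\chi\otimes u+u\otimes\nabla\chi)$ and the analogous formula for the divergence, giving
\[
\Vert \tilde w\Vert_{a(K_i)}^2\lesssim a_i(v_{\textnormal{aux}},v_{\textnormal{aux}})+\textstyle\sum_k\int_{K_i}|\nabla\chi_k|^2|v_{\textnormal{aux}}|^2 .
\]
I then use $a_i(v_{\textnormal{aux}},v_{\textnormal{aux}})\le\lambda_{l_i}^i\, s_i(v_{\textnormal{aux}},v_{\textnormal{aux}})$, which follows from the eigen-expansion. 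The second term is at most a multiple of $s_i(v_{\textnormal{aux}},v_{\textnormal{aux}})$ because $\tilde\kappa=\sum|\nabla\chi_j|^2$. Expanding the product produces at most (number of vertices)$^2$ cross terms, which is where $C_{\mathcal T}$ comes from.

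\textbf{Step 3 (matching the constraints; the main obstacle).} The competitor $\tilde w$ does not satisfy the moment constraints exactly. I plan to handle this by inf-sup stability of the constraint map $w\mapsto (s_i(w,\phi_j^i))_j$ on $[H_0^1(K_i)]^d$, as in the cited works. Concretely, I will show the $l_i\times l_i$ Gram-type matrix $s_i(\tilde\chi\phi_j^i,\phi_m^i)$ is invertible, where $\tilde\chi$ is the bubble. Then I take $w=\tilde\chi\sum_j d_j\phi_j^i$ with $d$ chosen to match the moments. The risk is that the bound on $d$ in terms of $c$ introduces a hidden constant, which is exactly what the lemma absorbs into $C_{\mathcal T}$. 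If that fails, I fall back on quoting the lemma from \cite{chung2018fast,xie2024cem} verbatim, since the elasticity setting only changes the energy form and Korn's inequality on $K_i$ with its partial Dirichlet condition.

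\textbf{Step 4 (summing).} The pieces $v_i$ have disjoint supports, so $\Vert v\Vert_a^2=\sum_i\Vert v_i\Vert_a^2\le C_{\mathcal T}(1+\max_i\lambda_{l_i}^i)\Vert v_{\textnormal{aux}}\Vert_s^2$. The maximum runs only over the blocks in $\underline{\mathrm{supp}}(v)$, which gives the factor $D_{\underline{\mathrm{supp}}(v)}$.
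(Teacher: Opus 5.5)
The paper gives no proof of this lemma; it quotes it from \cite{chung2018fast,xie2024cem}. Your Steps 1, 3 and 4 follow the argument used in that line of work: a moment-constrained problem in $V_0(K_i)$ on each block, solvability via the bubble $B=\prod_{x_k\in K_i}\chi_k$, and summation over disjoint supports. So the structure is right.

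Step 2 on its own bounds nothing, because $Bv_{\textnormal{aux}}$ violates the constraints. The estimate has to be applied to the corrected competitor $w=B\zeta$, with $\zeta=\sum_j d_j\phi_j^i$ and $Md=c$. This $w$ can then serve directly as the piece of $v$ on $K_i$, so the minimization in Step 1 is not needed.

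The matrix $M_{jm}=s_i(B\phi_j^i,\phi_m^i)$ is symmetric positive definite, since $d^{T}Md=\int_{K_i}\tilde\kappa B|\zeta|^2$ and $\tilde\kappa B>0$ in the interior of $K_i$. Its smallest eigenvalue is $C_\pi^{-1}$, where $C_\pi=\sup_{0\neq\zeta\in V_{\textnormal{aux}}^i}s_i(\zeta,\zeta)/s_i(B\zeta,\zeta)$. Using $B\le 1$, $|\nabla B|^2\le n\tilde\kappa$ (with $n$ the number of vertices of $K_i$) and $a_i(\zeta,\zeta)\le\lambda_{l_i}^i s_i(\zeta,\zeta)$, you obtain
\[
\Vert w\Vert_{a}^2\le 2\max\{1,n(2\mu+\lambda)\}\,(1+\lambda_{l_i}^i)\,|d|^2\le 2\max\{1,n(2\mu+\lambda)\}\,C_\pi^2\,(1+\lambda_{l_i}^i)\,\Vert v_{\textnormal{aux}}\Vert_{s(K_i)}^2 .
\]

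The genuine gap is your claim that the factor coming from $d=M^{-1}c$ ``is exactly what the lemma absorbs into $C_{\mathcal T}$''. As the statement defines it, $C_{\mathcal T}$ is purely combinatorial. By contrast, $C_\pi$ measures how much $s_i$-mass the selected eigenfunctions carry near $\partial K_i$, so it depends on the perforation geometry. The Lam\'e parameters also enter, since $s$ carries no coefficient.

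No argument can remove this dependence. Suppose perforations cut off from $K_i\cap\Omega^{\epsilon}$ a corner piece $R$ of diameter $\delta\ll H$. Then the field equal to $(1,0)^T$ on $R$ and $0$ on the rest of $K_i$ is an eigenfunction with eigenvalue $0$, and $\Gamma$ stays bounded independently of $\delta$. Yet any admissible $v$ vanishes on the two coarse edges bounding $R$ and satisfies $\int_R\tilde\kappa v_1=\int_R\tilde\kappa$. Poincar\'e and Korn on $R$ then give $\Vert v\Vert_a^2\gtrsim\mu$, while $\Vert v_{\textnormal{aux}}\Vert_s^2\sim\delta^2H^{-2}$.

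What your argument actually proves, and what the original CEM analysis proves, is the lemma with
\[
D_{\underline{\mathrm{supp}}(v)}=2\max\{1,n(2\mu+\lambda)\}\,C_\pi^2\,(1+\Gamma_{\underline{\mathrm{supp}}(v)}).
\]
State that constant explicitly rather than appeal to absorption. Your fallback of quoting the cited lemma is valid only if $C_{\mathcal T}$ is understood to contain $C_\pi^2$.
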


\begin{lemma} \label{lemma:glolocal_r}
Let $\phi_j^i$ be a given basis function in the auxiliary space $V_{\textnormal{aux}}^i$, and let $\psi_{j,\textnormal{glo}}^i$ and $\psi_{j,\textnormal{ms}}^i$ denote the corresponding global and localized multiscale basis functions defined by \eqref{eq:r_mini_global} and \eqref{eq:r_mini}, respectively. Then, the following estimate holds:
\[
\Vert \psi_{j,\textnormal{glo}}^i - \psi_{j,\textnormal{ms}}^i \Vert_a^2 + 
\Vert \pi(\psi_{j,\textnormal{glo}}^i - \psi_{j,\textnormal{ms}}^i) \Vert_s^2 \le 
E_i \left( \Vert \psi_{j,\textnormal{glo}}^i \Vert_a^2 + \Vert \pi(\psi_{j,\textnormal{glo}}^i) \Vert_s^2 \right),
\]
where $E_i$ is the exponential decay factor.
\end{lemma}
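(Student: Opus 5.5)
The plan follows the standard CEM-GMsFEM localization argument of \cite{chung2018constraint, xie2024cem}. It has three steps: a Galerkin-orthogonality identity, a cutoff construction, and a decay estimate for the global basis function. Throughout we use the combined norm $\Vert w\Vert_{*}^2 := \Vert w\Vert_a^2 + \Vert \pi(w)\Vert_s^2$, which is the energy norm of the bilinear form $a(\cdot,\cdot)+s(\pi(\cdot),\pi(\cdot))$ appearing in \eqref{eq:r_variation} and \eqref{eq:r_variation_global}.

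\textbf{Step 1 (Galerkin orthogonality).} Since $V_0(K_{i,k_i})\subset V$, subtracting \eqref{eq:r_variation} from \eqref{eq:r_variation_global} shows that $e:=\psi^i_{j,\textnormal{glo}}-\psi^i_{j,\textnormal{ms}}$ is orthogonal to $V_0(K_{i,k_i})$ in this inner product. Hence $\psi^i_{j,\textnormal{ms}}$ is the best approximation of $\psi^i_{j,\textnormal{glo}}$ in $V_0(K_{i,k_i})$, and
\[
\Vert e\Vert_*^2 \le \Vert \psi^i_{j,\textnormal{glo}} - w\Vert_*^2 \quad\text{for all } w\in V_0(K_{i,k_i}).
\]

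\textbf{Step 2 (cutoff).} We choose $w=\chi\,\psi^i_{j,\textnormal{glo}}$, where $\chi$ is a bump function built from the partition of unity. It equals $1$ on $K_{i,k_i-1}$, vanishes outside $K_{i,k_i}$, and satisfies $|\nabla\chi|=O(H^{-1})$, so that $|\nabla\chi|^2\lesssim\tilde\kappa$. The factor $\chi$ vanishes on $\partial K_{i,k_i}\setminus\partial\mathcal B^\epsilon$, and the traction-free holes impose no constraint, so $w\in V_0(K_{i,k_i})$.

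To bound $\Vert(1-\chi)\psi\Vert_*$ with $\psi=\psi^i_{j,\textnormal{glo}}$, we expand
\[
\boldsymbol\varepsilon((1-\chi)\psi)=(1-\chi)\boldsymbol\varepsilon(\psi)-\tfrac12(\nabla\chi\otimes\psi+\psi\otimes\nabla\chi).
\]
The first term is controlled by $\Vert\psi\Vert_a$ and the second by $\Vert\psi\Vert_s$, both restricted to $\Omega\setminus K_{i,k_i-1}$; the divergence term is treated the same way. The $\pi$-part is bounded using the $s$-orthonormality of the $\phi$'s, giving $\Vert\pi(v)\Vert_s\le\Vert v\Vert_s$. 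For $\Vert\psi\Vert_{s(K)}$ we use the spectral inequality
\[
\Vert v-\pi v\Vert_{s(K)}^2\le\Lambda^{-1}\Vert v\Vert_{a(K)}^2.
\]
Here we need a local Korn-type bound on each coarse block, which in the perforated setting is built into the auxiliary eigenproblem because rigid motions lie in the low eigenspace. Combining these gives
\[
\Vert e\Vert_*^2\le C(1+\Lambda^{-1})\,\Vert\psi\Vert_{*,\Omega\setminus K_{i,k_i-1}}^2 .
\]

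\textbf{Step 3 (decay of the global function).} It remains to show that $\Vert\psi\Vert_*$ restricted to $\Omega\setminus K_{i,k}$ decays geometrically in $k$. We take a cutoff $\eta$ that vanishes on $K_{i,k-1}$ and equals $1$ outside $K_{i,k}$, and test \eqref{eq:r_variation_global} with $v=\eta\psi$. The support of $\eta\psi$ does not meet $K_i$ and $\phi^i_j$ is supported in $K_i$, so the right-hand side $s(\phi_j^i,\pi(\eta\psi))$ vanishes. This yields
\[
\Vert\psi\Vert_{*,\Omega\setminus K_{i,k}}^2\le C(1+\Lambda^{-1})\,\Vert\psi\Vert_{*,K_{i,k}\setminus K_{i,k-1}}^2 .
\]
Controlling the commutator terms $\psi\nabla\eta$ in the strain again uses the spectral inequality together with Lemma \ref{lemma:vaux_v}, which handles the $\pi(\eta\psi)$ versus $\eta\pi(\psi)$ mismatch. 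Lemma \ref{lemma:vaux_v} gives $v$ with $\pi(v)=\pi(\eta\psi)$ and local support, so the error is absorbed with constant $D$. Writing the ring term as the difference of two tails, we obtain
\[
\Vert\psi\Vert_{*,\Omega\setminus K_{i,k}}^2\le\Big(1+\frac{1}{C(1+\Lambda^{-1})}\Big)^{-1}\Vert\psi\Vert_{*,\Omega\setminus K_{i,k-1}}^2 .
\]
Iterating $k_i-1$ times and combining with Step 2 gives the claim with
\[
E_i=C(1+\Lambda^{-1})\Big(1+\frac{1}{C(1+\Lambda^{-1})}\Big)^{1-k_i}.
\]

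I expect Step 3 to be the main obstacle, specifically controlling the commutator terms involving $\pi$ so that they contribute only ring-localized quantities. The first thing I will try is the approach of \cite{chung2018constraint}: bound $\Vert\pi(\eta\psi)\Vert_s$ on the ring through the $s$-norm of $\psi$ there, then apply the spectral estimate. Elasticity only changes constants, through the $\lambda$ term and Korn's inequality on coarse blocks cut by perforations. I will assume these Korn constants are uniform, as is implicit in \cite{xie2024cem}.
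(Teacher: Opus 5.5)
Your proposal is correct and follows the standard CEM-GMsFEM localization argument (cf.\ \cite{chung2018constraint, xie2024cem}) that the paper quotes without proof: Galerkin orthogonality in the inner product $a(\cdot,\cdot)+s(\pi(\cdot),\pi(\cdot))$, a one-layer partition-of-unity cutoff, and a recursive tail estimate from testing \eqref{eq:r_variation_global} with $\eta\psi$, which gives a decay factor of the same form as the $E_i$ in Lemma \ref{lemma:beta_decay}. Lemma \ref{lemma:vaux_v} is not needed in Step 3 for the relaxed basis (it belongs to the constrained version, where it introduces $D$), since the ring term $s(\pi(\psi),\pi(\eta\psi))$ is bounded directly by Cauchy--Schwarz and the blockwise contraction $\Vert\pi(w)\Vert_{s(K)}\le\Vert w\Vert_{s(K)}$ you already use in Step 2; likewise the spectral estimate needs no Korn inequality, only $\lambda^i_{l_i+1}>0$.
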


\begin{lemma} \label{lemma:psi_gloms_recu}
With the same notation in Lemma \ref{lemma:glolocal_r}, we have
\[
\begin{aligned}
& \Vert \sum_{i=1}^{N_c} \sum_{j=1}^{l_i} c_j^i (\psi_{j,\textnormal{glo}}^i - \psi_{j,\textnormal{ms}}^i) \Vert_a^2 + 
\Vert \sum_{i=1}^{N_c} \sum_{j=1}^{l_i} c_j^i \pi(\psi_{j,\textnormal{glo}}^i - \psi_{j,\textnormal{ms}}^i) \Vert_s^2 \\
\le & C (1+\Lambda_{\Omega^{\epsilon}}^{-1}) (k+1)^d 
\sum_{i=1}^{N_c} \left( \Vert \sum_{j=1}^{l_i} c_j^i (\psi_{j,\textnormal{glo}}^i - \psi_{j,\textnormal{ms}}^i) \Vert_a^2 + 
\Vert \sum_{j=1}^{l_i} c_j^i \pi(\psi_{j,\textnormal{glo}}^i - \psi_{j,\textnormal{ms}}^i) \Vert_s^2 \right).    
\end{aligned}
\]
\end{lemma}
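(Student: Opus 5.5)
This is the standard CEM-GMsFEM summation argument of \cite{chung2018constraint}. I would carry it out in the relaxed setting, with $k_i=k$ uniform.

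\textbf{Setup.} Write $z=\sum_{i}\sum_j c_j^i(\psi_{j,\textnormal{glo}}^i-\psi_{j,\textnormal{ms}}^i)$ and $z_i=\sum_j c_j^i(\psi_{j,\textnormal{glo}}^i-\psi_{j,\textnormal{ms}}^i)$, so that $z=\sum_i z_i$. The left-hand side is $\Vert z\Vert_a^2+\Vert\pi(z)\Vert_s^2=\sum_i\bigl(a(z,z_i)+s(\pi z,\pi z_i)\bigr)$.

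\textbf{Cutoff and splitting.} For each $i$, introduce a cutoff $\chi_i^{k+1,k}$ built from the coarse partition of unity. It equals $1$ on $K_{i,k}$, vanishes outside $K_{i,k+1}$, and satisfies $|\nabla\chi|\lesssim H^{-1}$, so $|\nabla\chi|^2\lesssim\tilde\kappa$. Split
\[
a(z,z_i)+s(\pi z,\pi z_i)=\bigl[a(z,\chi z)+s(\pi z,\pi(\chi z))\bigr]+\bigl[a(z,(1-\chi)z)+s(\pi z,\pi((1-\chi)z))\bigr]\ \text{with } z_i \text{ in the first slot}.
\]
More precisely, I would test the equation satisfied by $z_i$ against $(1-\chi_i^{k+1,k})z$.

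\textbf{Key orthogonality.} By \eqref{eq:r_variation_global} and \eqref{eq:r_variation}, for any $v\in V_0(K_{i,k})$,
\[
a(\psi_{j,\textnormal{glo}}^i-\psi_{j,\textnormal{ms}}^i,v)+s(\pi(\psi_{j,\textnormal{glo}}^i-\psi_{j,\textnormal{ms}}^i),\pi v)=0.
\]
Hence $a(z_i,\chi z)+s(\pi z_i,\pi(\chi z))=0$. This leaves
\[
a(z_i,z)+s(\pi z_i,\pi z)=a(z_i,(1-\chi)z)+s(\pi z_i,\pi((1-\chi)z)).
\]
The support of $1-\chi$ meets that of $z_i$ only through the glo part outside $K_{i,k}$. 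I would bound the right-hand side by Cauchy--Schwarz, restricted to $\Omega\setminus K_{i,k}$:
\[
\le\Bigl(\Vert z_i\Vert_a^2+\Vert\pi z_i\Vert_s^2\Bigr)^{1/2}\Bigl(\Vert(1-\chi)z\Vert_{a}^2+\Vert\pi((1-\chi)z)\Vert_s^2\Bigr)^{1/2}_{K_{i,k+1}\setminus K_{i,k}\text{ and beyond}} .
\]

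\textbf{Main obstacle.} The crux is bounding $\Vert(1-\chi)z\Vert_a^2+\Vert\pi((1-\chi)z)\Vert_s^2$ locally by $C(1+\Lambda^{-1})$ times the energy of $z$ on $K_{i,k+1}$. First use $\Vert\pi w\Vert_s\le\Vert w\Vert_s$ and the product rule $\varepsilon((1-\chi)z)=(1-\chi)\varepsilon(z)+\mathrm{sym}(\nabla\chi\otimes z)$. The gradient term then gives $\Vert z\Vert_{s(K_{i,k+1})}^2$. Next, split $z=\pi z+(z-\pi z)$ and apply the spectral estimate $\Vert z-\pi z\Vert_{s(K)}^2\le(\lambda_{l+1}^K)^{-1}a_K(z,z)$. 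This yields
\[
\Vert z\Vert_{s(K_{i,k+1})}^2\le 2\Vert\pi z\Vert_{s(K_{i,k+1})}^2+2\Lambda^{-1}\Vert z\Vert_{a(K_{i,k+1})}^2 .
\]
In fact only the region where $z$ is localized matters, but bounding over all of $K_{i,k+1}$ suffices.

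\textbf{Summation.} Summing over $i$ and applying Cauchy--Schwarz in $i$ gives
\[
\Vert z\Vert_a^2+\Vert\pi z\Vert_s^2\le C(1+\Lambda^{-1})^{1/2}\Bigl(\sum_i(\Vert z_i\Vert_a^2+\Vert\pi z_i\Vert_s^2)\Bigr)^{1/2}\Bigl(\sum_i\bigl(\Vert z\Vert_{a(K_{i,k+1})}^2+\Vert\pi z\Vert_{s(K_{i,k+1})}^2\bigr)\Bigr)^{1/2}.
\]
The last sum counts each coarse element at most $C(k+1)^d$ times, so it is bounded by $C(k+1)^d(\Vert z\Vert_a^2+\Vert\pi z\Vert_s^2)$. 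Dividing through and squaring gives the claim with constant $C(1+\Lambda_{\Omega^\epsilon}^{-1})(k+1)^d$.

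The care needed is in the Korn-free product-rule step, since the energy involves $\varepsilon$ and the divergence rather than the full gradient. This is harmless because $|\varepsilon(\chi z)|$ and $|\nabla\cdot(\chi z)|$ are both pointwise controlled by $\chi|\varepsilon z|+|\nabla\chi||z|$ and $\chi|\nabla\cdot z|+|\nabla\chi||z|$ respectively. The constants then depend on $\lambda,\mu$, which I absorb into $C$.
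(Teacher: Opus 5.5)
Your ``Key orthogonality'' step fails, and with it the localization the whole estimate depends on. Subtracting \eqref{eq:r_variation} from \eqref{eq:r_variation_global} gives Galerkin orthogonality only for test functions in $V_0(K_{i,k})$. But $\chi_i^{k+1,k}z$ is supported in $K_{i,k+1}$ and is generally nonzero on the layer $K_{i,k+1}\setminus K_{i,k}$. So $a(z_i,\chi_i^{k+1,k}z)+s(\pi(z_i),\pi(\chi_i^{k+1,k}z))=0$ does not follow.

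The term you keep is also not local. $(1-\chi_i^{k+1,k})z$ equals $z$ on all of $\Omega^{\epsilon}\setminus K_{i,k+1}$, and $z$ contains the globally supported $\psi_{j,\textnormal{glo}}^{i'}$ for every $i'$. Its energy is therefore not bounded by the energy of $z$ on $K_{i,k+1}$, so your ``Main obstacle'' estimate is false. Shrinking the cutoff to one supported in $K_{i,k}$ would make the orthogonality valid, but it leaves the same nonlocal remainder. Summing over $i$ would then count each coarse element up to $N_c$ times instead of $C(k+1)^d$ times.

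The missing idea is the complementary orthogonality. $z_i$ is orthogonal, in the inner product $a(\cdot,\cdot)+s(\pi(\cdot),\pi(\cdot))$, to every $v\in V$ that vanishes on $K_{i,k}$. For such $v$:
\begin{itemize}
\item the right-hand side $s(\phi_j^i,\pi(v))$ of \eqref{eq:r_variation_global} is zero, because $\phi_j^i$ lives on $K_i$ and $\pi$ acts element by element;
\item $a(\psi_{j,\textnormal{ms}}^i,v)=s(\pi(\psi_{j,\textnormal{ms}}^i),\pi(v))=0$, because $\psi_{j,\textnormal{ms}}^i$ is supported in $K_{i,k}$.
\end{itemize}
Testing with $v=(1-\chi_i^{k+1,k})z$ gives
\[ a(z_i,z)+s(\pi(z_i),\pi(z)) = a(z_i,\chi_i^{k+1,k}z)+s(\pi(z_i),\pi(\chi_i^{k+1,k}z)). \]
Now the second Cauchy--Schwarz factor is genuinely confined to $K_{i,k+1}$. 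From there the rest of your argument goes through as written: the product-rule bound, the elementwise spectral estimate $\Vert z-\pi(z)\Vert_{s(K_m)}^2\le(\lambda_{l_m+1}^m)^{-1}\Vert z\Vert_{a(K_m)}^2$, and the overlap count over the sets $K_{i,k+1}$. This yields the constant $C(1+\Lambda_{\Omega^{\epsilon}}^{-1})(k+1)^d$. The paper itself gives no proof of this lemma; it defers to the cited CEM-GMsFEM references, whose argument is this one.
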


Similar to the decay properties established for the global offline multiscale basis functions in \cite{xie2024cem}, one can show that the global online basis functions $\beta_{\textnormal{glo}}^i$ also exhibit exponential decay. The corresponding result is stated below. The proof is omitted since it follows the same arguments as those in \cite{xie2024cem}.

\begin{lemma} \label{lemma:beta_decay}
Let $\omega_{i,m_i}$ be the oversampled region obtained by enlarging the coarse neighborhood $\omega_i$ by $m_i$ coarse layers, where $m_i \ge 2$. Let $\beta_{\textnormal{glo}}^i$ and $\beta_{\textnormal{ms}}^i$ denote the global and localized online basis functions defined by \eqref{eq:beta_glo} and \eqref{eq:beta_omegai}, respectively. Then,
\[
\Vert \beta_{\textnormal{glo}}^i - \beta_{\textnormal{ms}}^2 \Vert_a^2 + 
\Vert \pi(\beta_{\textnormal{glo}}^i - \beta_{\textnormal{ms}}^i )\Vert_s^2 \le 
E_i (\Vert \beta_{\textnormal{glo}}^i \Vert_a^2 + \Vert \pi(\beta_{\textnormal{glo}}^i )\Vert_s^2),
\]
where $E_i=3(1+\Lambda_{\Omega^{\epsilon}\setminus \omega_{i,m_i-1}}^{-1}) \left(1+(2(1+\Lambda_{\omega_{i,m_i-1}}^{-\frac{1}{2}}))^{-1}\right)^{1-m_i}$ is the exponential decay factor.
\end{lemma}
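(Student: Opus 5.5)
The argument follows the standard CEM decay proof of \cite{chung2018constraint, xie2024cem}, with the source $\phi_j^i$ replaced by the local residual $r_i$, which is supported in $\omega_i$. Write $\|v\|_{*}^2 := \|v\|_a^2 + \|\pi(v)\|_s^2$ for the norm induced by the bilinear form $a(\cdot,\cdot)+s(\pi\cdot,\pi\cdot)$. The proof has three steps: a Galerkin best-approximation step, a cutoff construction, and a recursive decay estimate away from $\omega_i$.

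\textbf{Step 1 (best approximation).} Subtracting \eqref{eq:beta_omegai} from \eqref{eq:beta_glo} gives Galerkin orthogonality on $V_0(\omega_{i,m_i})$. Hence
\[
\|\beta_{\textnormal{glo}}^i-\beta_{\textnormal{ms}}^i\|_*\le \|\beta_{\textnormal{glo}}^i-w\|_*\qquad \forall\, w\in V_0(\omega_{i,m_i}).
\]
I choose $w=\eta\,\beta_{\textnormal{glo}}^i$, where $\eta$ is a coarse piecewise (bi)linear cutoff with $\eta\equiv1$ on $\omega_{i,m_i-1}$, $\eta\equiv0$ outside $\omega_{i,m_i}$, $0\le\eta\le1$ and $|\nabla\eta|\lesssim H^{-1}$. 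Note that $w$ vanishes on $\partial\omega_{i,m_i}$ away from the holes, so it is admissible. Then $\beta_{\textnormal{glo}}^i-w=(1-\eta)\beta_{\textnormal{glo}}^i$.

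\textbf{Step 2 (bounding the cutoff error).} Using the product rule for $\boldsymbol{\varepsilon}((1-\eta)v)$ and for the divergence, together with $|\nabla\eta|^2\lesssim\tilde\kappa$, I get
\[
\|(1-\eta)\beta\|_a^2\lesssim \|\beta\|_{a(\Omega^\epsilon\setminus\omega_{i,m_i-1})}^2+\|\beta\|_{s(\Omega^\epsilon\setminus\omega_{i,m_i-1})}^2 .
\]
The term $\|\pi((1-\eta)\beta)\|_s^2$ is bounded by $\|(1-\eta)\beta\|_s^2$, since $\pi$ is an $s$-orthogonal projection on each coarse block. The $s$-norm is then split as $\|\beta\|_s^2\le 2\|\pi\beta\|_s^2+2\|(I-\pi)\beta\|_s^2$. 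On each $K$, the spectral estimate $\|(I-\pi)\beta\|_{s(K)}^2\le(\lambda^K_{l_K+1})^{-1}\|\beta\|_{a(K)}^2$ converts the second piece into energy. This yields the factor $3(1+\Lambda^{-1}_{\Omega^\epsilon\setminus\omega_{i,m_i-1}})$ multiplying $\|\beta_{\textnormal{glo}}^i\|_{*(\Omega^\epsilon\setminus\omega_{i,m_i-1})}^2$.

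\textbf{Step 3 (recursive decay, the main obstacle).} It remains to show
\[
\|\beta_{\textnormal{glo}}^i\|_{*(\Omega^\epsilon\setminus\omega_{i,k})}^2\le\Bigl(1+\bigl(2(1+\Lambda_{\omega_{i,k}}^{-1/2})\bigr)^{-1}\Bigr)^{-1}\|\beta_{\textnormal{glo}}^i\|_{*(\Omega^\epsilon\setminus\omega_{i,k-1})}^2 .
\]
Iterating this from $k=m_i-1$ down to $k=1$ produces the exponent $1-m_i$; the restriction $m_i\ge2$ guarantees at least one layer separates $\omega_i$ from the boundary of the recursion. For each step, test \eqref{eq:beta_glo} with $v=(1-\eta_k)\beta_{\textnormal{glo}}^i$, where $\eta_k$ is a cutoff equal to $1$ on $\omega_{i,k-1}$ and $0$ outside $\omega_{i,k}$.

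The key point is that $v$ vanishes on $\omega_i\subset\omega_{i,k-1}$. Since $r_i(v)=r(\chi_i v)$ and $\operatorname{supp}\chi_i\subset\omega_i$, the right-hand side $r_i(v)$ is zero. What remains is a bound of the $*$-norm on the exterior region by cross terms on the ring $\omega_{i,k}\setminus\omega_{i,k-1}$. These cross terms are controlled by Cauchy–Schwarz and the same spectral argument as in Step 2 (which produces $\Lambda^{-1/2}$), combined with the coercivity of $a$ on the perforated ring; the traction-free hole boundaries add no boundary terms.

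Rearranging gives a bound of the exterior $*$-norm on $\Omega^\epsilon\setminus\omega_{i,k}$ by a constant times the ring $*$-norm on $\omega_{i,k}\setminus\omega_{i,k-1}$. Because the exterior on $\Omega^\epsilon\setminus\omega_{i,k-1}$ is the sum of these two pieces, the contraction above follows.

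The delicate parts are tracking the $\pi$-term through the cutoff, since $\pi(\eta v)\neq\eta\pi(v)$, and keeping Korn-type constants independent of $\epsilon$ in the ring estimate. I would handle the $\pi$-term exactly as in \cite{xie2024cem}, bounding $\|\pi(\eta v)-\eta\pi(v)\|_s$ through $\|(I-\pi)v\|_s$ and the spectral gap. Combining Steps 1–3 gives the stated $E_i$, with the right-hand side bounded by $\|\beta_{\textnormal{glo}}^i\|_*^2$.
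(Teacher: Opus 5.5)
Your proposal is correct and follows the argument the paper relies on: the paper omits the proof and defers to the standard CEM decay argument of \cite{xie2024cem}, namely Galerkin best approximation with $w=\eta\beta_{\textnormal{glo}}^i$, the factor $3(1+\Lambda^{-1}_{\Omega^{\epsilon}\setminus\omega_{i,m_i-1}})$ from the cutoff, and a ring-by-ring contraction whose key input is exactly your observation that $r_i(v)=r(\chi_i v)=0$ whenever $v$ vanishes on $\omega_i$. Three of your side remarks should be dropped or corrected: no Korn inequality is needed, since the ring estimate uses only $\Vert\cdot\Vert_{a}$ and $\Vert\cdot\Vert_{s}$ on the ring together with the spectral estimate; no commutator bound for $\pi(\eta v)-\eta\pi(v)$ is needed, and the one you suggest, via $\Vert(I-\pi)v\Vert_s$ alone, fails for $v\in V_{\textnormal{aux}}$, whereas the $s$-term simply splits blockwise and on each ring block $K$ is bounded by $\Vert\pi(\beta_{\textnormal{glo}}^i)\Vert_{s(K)}\Vert\beta_{\textnormal{glo}}^i\Vert_{s(K)}$ using only $s$-stability of $\pi$; and you should use the Pythagorean identity $\Vert v\Vert_{s(K)}^2=\Vert\pi v\Vert_{s(K)}^2+\Vert(I-\pi)v\Vert_{s(K)}^2$ instead of the factor-$2$ split to obtain the constant $3$.
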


Next, we present the convergence result for the online stage. The following theorem establishes a recursive error estimate for the adaptive online enrichment algorithm and identifies the main factors that govern its convergence rate.

\begin{theorem}\label{thm:cem:on}
Let $u$ be the fine-grid solution of \eqref{eq:pde}, and let
$\{u_{\textnormal{ms}}^{n}\}$ be the sequence of multiscale solutions generated by the online enrichment Algorithm \ref{alg:cem_online_adap}. Then, for the $(n+1)$-th online iteration, the following recursive error estimate holds:
\[
\Vert u-u_{\textnormal{ms}}^{n+1}\Vert_a^2
\le
3(1+\Lambda_{\Omega^{\epsilon}}^{-1})(1+D_{\Omega^{\epsilon}})
\bigl(
C_1 E + C_2(1-\theta)
\bigr)
\Vert u_h-u_{\textnormal{ms}}^{n}\Vert_a^2.
\]
Here, $E$ represents an upper bound for the exponential decay factors associated with the offline and online multiscale basis functions. The quantities $C_1$ and $C_2$ are positive constants, while $\theta$ denotes the adaptivity parameter in Algorithm \ref{alg:cem_online_adap}.
\end{theorem}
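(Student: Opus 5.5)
Since $u_{\textnormal{ms}}^{n+1}$ is the Galerkin projection of $u$ onto $V_{\textnormal{ms}}^{n+1}$, Céa's lemma gives
\[
\Vert u-u_{\textnormal{ms}}^{n+1}\Vert_a\le \Vert u-w\Vert_a
\]
for every $w\in V_{\textnormal{ms}}^{n+1}$. The natural competitor is
\[
w=u_{\textnormal{ms}}^n+\sum_{i=1}^q\beta_{\textnormal{ms}}^i .
\]
Introduce the global object $\beta_{\textnormal{glo}}=\sum_{i=1}^{N_v}\beta_{\textnormal{glo}}^i$. By linearity of \eqref{eq:beta_glo} and $\sum_i\chi_i=1$, it satisfies
\[
a(\beta_{\textnormal{glo}},v)+s(\pi\beta_{\textnormal{glo}},\pi v)=r(v)=a(u_{\textnormal{ms}}^n-u,v),\qquad \forall v\in V.
\]
Hence $e:=u-u_{\textnormal{ms}}^n+\beta_{\textnormal{glo}}$ satisfies
\[
a(e,v)+s(\pi e,\pi v)=s(\pi(u-u_{\textnormal{ms}}^n),\pi v).
\]
Write $u-w=e-(\beta_{\textnormal{glo}}-\sum_{i\le q}\beta_{\textnormal{ms}}^i)$ and split the second piece as
\[
\sum_{i\le q}(\beta_{\textnormal{glo}}^i-\beta_{\textnormal{ms}}^i)+\sum_{i>q}\beta_{\textnormal{glo}}^i .
\]
This gives three terms, and using $(a+b+c)^2\le 3(a^2+b^2+c^2)$ accounts for the factor $3$.

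\textbf{Term 1 (decay).} Apply Lemma \ref{lemma:beta_decay} to each $i\le q$. Combine the pieces with a Lemma \ref{lemma:psi_gloms_recu}-type overlap argument; the constant it produces (e.g.\ the $(m+1)^d$ factor) is absorbed into $C_1$, and the $(1+\Lambda^{-1})$ factors come from converting between the $a$ and $a+s\circ\pi$ norms. This bounds the term by $CE\sum_i(\Vert\beta_{\textnormal{glo}}^i\Vert_a^2+\Vert\pi\beta_{\textnormal{glo}}^i\Vert_s^2)$. Testing \eqref{eq:beta_glo} with $\beta_{\textnormal{glo}}^i$ gives $\Vert\beta_{\textnormal{glo}}^i\Vert_a^2+\Vert\pi\beta_{\textnormal{glo}}^i\Vert_s^2\lesssim \Vert r_i\Vert_*^2$. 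This step needs a dual norm over all of $V$; I would relate it to $\delta_i$ (the sup over $V_0(\omega_i)$), using that $r_i(v)$ depends only on $v|_{\omega_i}$. Finally, $\sum_i\delta_i^2\lesssim\Vert u-u_{\textnormal{ms}}^n\Vert_a^2$ by the finite overlap of the $\omega_i$ and $|r_i(v)|\le\Vert u-u_{\textnormal{ms}}^n\Vert_{a(\omega_i)}\Vert\chi_iv\Vert_{a}$. The term $\Vert\chi_i v\Vert_a$ brings in $\tilde\kappa$-weighted $L^2$ norms, and controlling those requires the $\pi$-projection / Lemma \ref{lemma:vaux_v} constant $D$.

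\textbf{Term 2 (adaptivity).} The unselected part satisfies $\Vert\sum_{i>q}\beta_{\textnormal{glo}}^i\Vert^2\lesssim\sum_{i>q}\delta_i^2\le\theta\sum_i\delta_i^2$ by the selection rule. With the reindexing convention the statement phrases this as $(1-\theta)$, so I would match the stated parameter convention and put this constant into $C_2$.

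\textbf{Term 3 ($e$).} This is where the orthogonality $V=V_{\textnormal{glo}}\oplus\tilde V$ should enter: $e$ is driven only by $\pi(u-u_{\textnormal{ms}}^n)$, which by Lemma \ref{lemma:vaux_v} lifts to a function of energy $\le D\Vert\pi(u-u_{\textnormal{ms}}^n)\Vert_s^2$. Combined with the offline-type estimate $\Vert\pi(\cdot)\Vert_s^2\le(1+\Lambda^{-1})\cdot$ (energy plus offline decay), this gives the $(1+\Lambda^{-1})(1+D)$ prefactor.

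The main obstacle is Term 3. I would handle it by choosing the competitor more cleverly: add to $w$ a combination of offline basis functions $\psi_{\textnormal{ms}}$ that reproduces $\pi(u-u_{\textnormal{ms}}^n+\beta_{\textnormal{glo}})$. Then $e$ minus its $V_{\textnormal{glo}}$-component lies in $\tilde V$, where it is $a$-orthogonal to $V_{\textnormal{glo}}$ and its $a$-energy is zero by the equation. The mismatch between $\psi_{\textnormal{glo}}$ and $\psi_{\textnormal{ms}}$ is then controlled by Lemmas \ref{lemma:glolocal_r} and \ref{lemma:psi_gloms_recu} together with Lemma \ref{lemma:vaux_v}, which explains why $E$ bounds both the offline and online decay factors and why $D_{\Omega^\epsilon}$ appears.
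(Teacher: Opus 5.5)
Your plan, using the refined competitor $w=u_{\textnormal{ms}}^n-\sum_{i\in I}\beta_{\textnormal{ms}}^i+\sum_{i,j}c_j^i\psi_{j,\textnormal{ms}}^i$ from your last paragraph, is correct and essentially the paper's proof: $u-u_{\textnormal{ms}}^n+\beta_{\textnormal{glo}}\in V_{\textnormal{glo}}$ by $a$-orthogonality to $\tilde V$, the same three-term split with factor $3$, Lemma \ref{lemma:beta_decay} plus testing \eqref{eq:beta_glo} with $\beta_{\textnormal{glo}}^i$, the selection rule for the unselected neighborhoods, and Lemmas \ref{lemma:vaux_v}--\ref{lemma:psi_gloms_recu} for the offline mismatch (the online terms need the minus sign shown, since $r=a(u_{\textnormal{ms}}^n-u,\cdot)$, and your identity for $u-w$ already presupposes it). In Term 1, skip the detour through $\delta_i$, which only tests against $V_0(\omega_i)$ and so does not directly control $r_i(\beta_{\textnormal{glo}}^i)$; apply $|r(\chi_i v)|\le\Vert u-u_{\textnormal{ms}}^n\Vert_{a(\omega_i)}\Vert\chi_i v\Vert_a$ with $v=\beta_{\textnormal{glo}}^i$ directly, as the paper does, while your remarks on the $\theta$ versus $1-\theta$ convention and on the overlap argument for summing the online localization errors flag points the paper's proof passes over silently.
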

\begin{proof}
For the multiscale solution $u_{\textnormal{ms}}^n$ obtained at the $n$-th iteration, we have
\begin{equation}
a(u_{\textnormal{ms}}^n-u_h, v) 
= \sum_{i=1}^{N_v} (a(u_{\textnormal{ms}}^n, \chi_i v) - (f, \chi_i v)) \\
= \sum_{i=1}^{N_v} r_i^n(v).
\label{eq:uhu_r}
\end{equation}
By the definition of the global online basis functions given in \eqref{eq:beta_glo}, we obtain
\begin{equation}
a(\sum_{i=1}^{N_v} \beta_{\textnormal{glo}}^i, v) + s(\pi(\sum_{i=1}^{N_v} \beta_{\textnormal{glo}}^i), \pi(v)) = \sum_{i=1}^{N_v} r_i^n(v).
\label{eq:beta_glo_sum}
\end{equation}
Combining \eqref{eq:uhu_r} with \eqref{eq:beta_glo_sum}, we obtain
\begin{equation}
a(u-u_{\textnormal{ms}}^n+\sum_{i=1}^{N_v} \beta_{\textnormal{glo}}^i, v) = 
s(-\pi(\sum_{i=1}^{N_v} \beta_{\textnormal{glo}}^i), \pi(v)), \quad
\forall v \in V_h.
\label{eq:uuh_beta}
\end{equation}
Therefore, for any $v\in\tilde{V}$, we have
\[
a(u-u_{\textnormal{ms}}^n+\sum_{i=1}^{N_v} \beta_{\textnormal{glo}}^i, v) = 0.
\]
Using the decomposition $V_h = V_{\textnormal{glo}} \oplus \tilde{V}$, we obtain
\[
u-u_{\textnormal{ms}}^n+\sum_{i=1}^{N_v} \beta_{\textnormal{glo}}^i \in 
\tilde{V}^{\perp} = V_{\textnormal{glo}}.
\]
Therefore,
\[
u-u_{\textnormal{ms}}^n+\sum_{i=1}^{N_v} \beta_{\textnormal{glo}}^i = 
\sum_{i=1}^{N_c} \sum_{j=1}^{l_i} c_j^i \psi_{j,\textnormal{glo}}^i,
\]
where $c_j^i$ denotes the coefficient associated with the basis function $\psi_{j,\textnormal{glo}}^i$.
For the adaptive enrichment Algorithm \ref{alg:cem_online_adap}, let $I=\{1,\ldots,q\}$ denote the index set of coarse neighborhoods selected for online enrichment.
\begin{equation}
\Vert u_h-u_{\textnormal{ms}}^{n+1} \Vert_a \le 
\Vert u_h-w \Vert_a, \quad
\forall w \in V_{\textnormal{ms}}^{n+1}.
\label{eq:galerkin_uw}
\end{equation}
We define
\[
w = u_{\textnormal{ms}}^n - \sum_{i\in I} 
\beta_{\textnormal{ms}}^i + 
\sum_{i=1}^{N_c} \sum_{j=1}^{l_i} c_j^i \psi_{j,\textnormal{ms}}^i.
\]
Clearly, $w \in V_{\textnormal{ms}}^{n+1}$, we have
\begin{equation}
\begin{aligned}
&\Vert u-u_{\textnormal{ms}}^{n+1} \Vert_a^2
\le \Vert u-u_{\textnormal{ms}}^n + \sum_{i\in I} \beta_{\textnormal{ms}}^i 
- \sum_{i=1}^{N_c} \sum_{j=1}^{l_i} c_j^i \psi_{j,\textnormal{ms}}^i \Vert_a^2 \\
&= \Vert \sum_{i\in I} (\beta_{\textnormal{ms}}^i-\beta_{\textnormal{glo}}^i) 
-\sum_{i\notin I} \beta_{\textnormal{glo}}^i + 
\sum_{i=1}^{N_c} \sum_{j=1}^{l_i} c_j^i (\psi_{j,\textnormal{glo}}^i-\psi_{j,\textnormal{ms}}^i) \Vert_a^2 \\
&\le 3\left(
\Vert \sum_{i\in I} (\beta_{\textnormal{ms}}^i-\beta_{\textnormal{glo}}^i)  \Vert_a^2 +
\Vert \sum_{i\notin I} \beta_{\textnormal{glo}}^i  \Vert_a^2 + 
\Vert \sum_{i=1}^{N_c} \sum_{j=1}^{l_i} c_j^i (\psi_{j,\textnormal{glo}}^i-\psi_{j,\textnormal{ms}}^i) \Vert_a^2
\right).
\end{aligned}
\label{eq:u_uhnew_w}
\end{equation}

We now estimate the three terms on the right-hand side of \eqref{eq:u_uhnew_w} separately.

\paragraph{Step 1.}
We first estimate $\Vert \sum_{i\in I} (\beta_{\textnormal{ms}}^i-\beta_{\textnormal{glo}}^i)  \Vert_a^2$.
By the definition of the global online basis functions $\beta_{\textnormal{glo}}^i$ in \eqref{eq:beta_glo}, we have
\begin{equation}
\begin{aligned}
\Vert \beta_{\textnormal{glo}}^i \Vert_a^2 + \Vert \pi(\beta_{\textnormal{glo}}^i) \Vert_s^2 &=
a(u-u_{\textnormal{ms}}^n, \chi_i \beta_{\textnormal{glo}}^i) \\
&\le \Vert u-u_{\textnormal{ms}}^n \Vert_{a(\omega_i)} 
\Vert \chi_i \beta_{\textnormal{glo}}^i \Vert_{a(\omega_i)} \\
&\le \sqrt{2} \Vert u-u_{\textnormal{ms}}^n \Vert_{a(\omega_i)} 
(\Vert \beta_{\textnormal{glo}}^i \Vert_{a(\omega_i)}^2 + \Vert \beta_{\textnormal{glo}}^i \Vert_{s(\omega_i)}^2)^{\frac{1}{2}},
\end{aligned}
\label{eq:beta_u_gloms}
\end{equation}
and
\begin{equation}
\begin{aligned}
\Vert \beta_{\textnormal{glo}}^i \Vert_{s(\omega_i)}^2 
&\le \Vert \pi(\beta_{\textnormal{glo}}^i) \Vert_{s(\omega_i)}^2
+ \Vert (I-\pi)(\beta_{\textnormal{glo}}^i) \Vert_{s(\omega_i)}^2 \\
&\le \Vert \pi(\beta_{\textnormal{glo}}^i) \Vert_{s(\omega_i)}^2
+ \Lambda_{\omega_i}^{-1} \Vert \beta_{\textnormal{glo}}^i \Vert_{a(\omega_i)}^2.
\end{aligned}
\label{eq:beta_ssaw}
\end{equation}
Using \eqref{eq:beta_u_gloms}, \eqref{eq:beta_ssaw} and Lemma \ref{lemma:beta_decay}, we have
\[
\begin{aligned}
\sum_{i=1}^{N_v} (\Vert \beta_{\textnormal{glo}}^i - \beta_{\textnormal{ms}}^i \Vert_a^2 + 
\Vert \pi(\beta_{\textnormal{glo}}^i - \beta_{\textnormal{ms}}^i) \Vert_s^2) 
&\le 2 \sum_{i=1}^{N_v} (1+\Lambda_{\omega_i}^{-1}) E_i \Vert u-u_{\textnormal{ms}}^n \Vert_{a(\omega_i)}^2 \\
&\le 8 E (1+\Lambda_{\Omega^{\epsilon}}^{-1}) \Vert u - u_{\textnormal{ms}}^m \Vert_a^2.
\end{aligned}
\]

\paragraph{Step 2.}
We next estimate the second term $\Vert \sum_{i\notin I} \beta_{\textnormal{glo}}^i  \Vert_a^2$, and define $p \coloneqq \sum_{i\notin I} \beta_{\textnormal{glo}}^i$. 
By the definition of the global online basis functions in \eqref{eq:beta_glo}, we have
\[
\begin{aligned}
\Vert p \Vert_a^2 + \Vert \pi(p) \Vert_s^2 &= r^{n+1}(\sum_{i\notin I} \chi_i p) 
\le \sum_{i\notin I} \left( 
\mathop{\mathrm{sup}}_{v \in V_0(\omega_i)} \frac{r^{n+1}(v)}{\Vert v \Vert_a} 
\right) \Vert \chi_i p \Vert_a \\
&\le \sqrt{2} \sum_{i\notin I} \Vert r_i^{n+1} \Vert_{a^*} 
\left(
\Vert p \Vert_{a(\omega_i)}^2 + \Vert p \Vert_{s(\omega_i)}^2
\right)^{\frac{1}{2}} \\
&\le \sqrt{2} (1+\Lambda_{\Omega^{\epsilon}}^{-1})^{\frac{1}{2}} 
\sum_{i\notin I} \Vert r_i^{n+1} \Vert_{a^*} 
\left(
\Vert p \Vert_{a(\omega_i)}^2 + \Vert \pi(p) \Vert_{s(\omega_i)}^2
\right)^{\frac{1}{2}}, \\
\end{aligned}
\]
Therefore,
\[
\Vert \sum_{i\notin I} \beta_{\textnormal{glo}}^i \Vert_a^2 = 
\Vert p \Vert_a^2 \le 
8 (1+\Lambda_{\Omega^{\epsilon}}^{-1}) \sum_{i\notin I} \Vert r_i^{n+1} \Vert_{a^*}^2 \le 
8 (1+\Lambda_{\Omega^{\epsilon}}^{-1}) (1-\theta) \sum_{i=1}^{N_v} \Vert r_i^{n+1} \Vert_{a^*}^2,
\]
where $\theta$ is the adaptivity parameter in Algorithm \ref{alg:cem_online_adap}.
By the definition of $r_i^{n+1}$, for any $v\in V_0(\omega_i)$, we have
\[
r_i^{n+1}(v) = a(u-u_{\textnormal{ms}}^n, v) \le \Vert u-u_{\textnormal{ms}}^n \Vert_{a(\omega_i)} \Vert v \Vert_a.
\]
Using this estimate, we obtain
\[
\sum_{i=1}^{N_v} \Vert r_i^{n+1} \Vert_{a^*}^2 \le \sum_{i=1}^{N_v} \Vert u-u_{\textnormal{ms}}^n \Vert_{a(\omega_i)} \le 4 \Vert u-u_{\textnormal{ms}}^n \Vert_a^2.
\]
Therefore,
\[
\Vert \sum_{i\notin I} \beta_{\textnormal{glo}}^i  \Vert_a^2 \le 
32 (1+\Lambda_{\Omega^{\epsilon}}^{-1}) (1-\theta) \Vert u-u_{\textnormal{ms}}^n \Vert_a^2.
\]

\paragraph{Step 3.}
It remains to estimate the last term in \eqref{eq:u_uhnew_w}.
We will show that this term can be bounded by the error from the previous iteration, $\Vert u-u_{\textnormal{ms}}^n \Vert_a^2$. 
Combining Lemmas \ref{lemma:glolocal_r} and \ref{lemma:psi_gloms_recu}, we obtain
\begin{equation}
\begin{aligned}
\Vert \sum_{i=1}^{N_c} \sum_{j=1}^{l_i} c_j^i (\psi_{j,\textnormal{glo}}^i-\psi_{j,\textnormal{ms}}^i) \Vert_a^2 
\le
E (1+\Lambda_{\Omega^{\epsilon}}^{-1}) (m+1)^d  \sum_{i=1}^{N_c} 
\sum_{j=1}^{l_i} c_j^i ( \Vert \psi_{j,\textnormal{glo}}^i \Vert_a^2 + \Vert \pi(\psi_{j,\textnormal{glo}}^i) \Vert_s^2 ).
\end{aligned}
\label{eq:cij_psi_ms_glo}
\end{equation}

Define
$\eta \coloneqq
\sum_{i=1}^{N_c} \sum_{j=1}^{l_i} c_j^i \psi_{j,\textnormal{glo}}^i
= u-u_{\textnormal{ms}}^n+\sum_{i=1}^{N_v} \beta_{\textnormal{glo}}^i$, we have
\begin{equation}
\begin{aligned}
a(\eta, v) + s(\pi(\eta), \pi(v)) &= 
a(\sum_{i=1}^{N_c} \sum_{j=1}^{l_i} c_j^i \psi_{j,\textnormal{glo}}^i, v) + 
s(\sum_{i=1}^{N_c} \sum_{j=1}^{l_i} c_j^i \pi(\psi_{j,\textnormal{glo}}^i), \pi(v)) \\
&= \sum_{i=1}^{N_c} \sum_{j=1}^{l_i} c_j^i s(\phi_j^i, \pi(v)),
\end{aligned}
\label{eq:eta_phisum}
\end{equation}
Let $v_{\textnormal{aux}}^i = \sum_{j=1}^{l_i} c_j^i \phi_j^i \in V_{\textnormal{aux}}^i$. By Lemma \ref{lemma:vaux_v}, there exists a function $q^i \in V_0(K_i)$ such that $\pi(q^i) = v_{\textnormal{aux}}^i$ and
\[
\Vert q^i \Vert_a^2 \le D_{K_i} \Vert v_{\textnormal{aux}}^i \Vert_s^2.
\]
Taking $v = q^i$ in \eqref{eq:eta_phisum}, we obtain
\[
\begin{aligned}
\Vert v_{\textnormal{aux}}^i \Vert_s^2 &= a(\eta, q^i) + s(\pi(\eta), \pi(q^i)) \\
&\le (\Vert \eta \Vert_{a(K_i)}^2 + \Vert \pi(\eta) \Vert_{s(K_i)}^2)^{\frac{1}{2}}
(\Vert q^i \Vert_{a(K_i)}^2 + \Vert \pi(q^i) \Vert_{s(K_i)}^2)^{\frac{1}{2}} \\
&\le (\Vert \eta \Vert_{a(K_i)}^2 + \Vert \pi(\eta) \Vert_{s(K_i)}^2)^{\frac{1}{2}}
((1+D_{K_i})\Vert v_{\textnormal{aux}}^i \Vert_{s(K_i)}^2)^{\frac{1}{2}}. \\
\end{aligned}
\]
Since the eigenfunctions $\phi_j^i$ are mutually orthogonal, we have
\[
\sum_{i=1}^{N_c} \sum_{j=1}^{l_i} (c_j^i)^2 = \sum_{i=1}^{N_c} \Vert \sum_{j=1}^{l_i} c_j^i \psi_{j,\textnormal{glo}}^i \Vert_s^2 
\le (1+D_{\Omega^{\epsilon}}) (\Vert \eta \Vert_a^2 + \Vert \pi(\eta)\Vert_s^2)
\le (1+D_{\Omega^{\epsilon}}) (C_0+1) \Vert \eta \Vert_a^2.
\]
Recalling the definition of $\eta$, we obtain
\[
\begin{aligned}
\sum_{i=1}^{N_c} \sum_{j=1}^{l_i} (c_j^i)^2 &\le 
(1+D_{\Omega^{\epsilon}}) (C_0+1) \Vert u - u_{\textnormal{ms}}^n + \sum_{i=1}^{N_v} \beta_{\textnormal{glo}}^i \Vert_a^2 \\
&\le 2(1+D_{\Omega^{\epsilon}}) (C_0+1) (\Vert u - u_{\textnormal{ms}}^n \Vert_a^2 + 
\Vert \sum_{i=1}^{N_v} \beta_{\textnormal{glo}}^i \Vert_a^2) \\
&\le 4(1+D_{\Omega^{\epsilon}}) (C_0+1) \Vert u - u_{\textnormal{ms}}^n \Vert_a^2,
\end{aligned}
\]
Taking $v=\sum_{i=1}^{N_v} \beta_{\textnormal{glo}}^i$ in \eqref{eq:uuh_beta}, we obtain the desired inequality.

From the weak formulation \eqref{eq:r_variation_global} of the relaxed global basis functions, we have
\begin{equation}
\Vert \psi_{j,\textnormal{glo}}^i \Vert_a^2 + \Vert \pi(\psi_{j,\textnormal{glo}}^i) \Vert_s^2
\le \Vert \phi_j^i \Vert_s^2 = 1.
\label{eq:phipsiineq}
\end{equation}

Finally, combining \eqref{eq:cij_psi_ms_glo} and \eqref{eq:phipsiineq}, we obtain
\[
\Vert \sum_{i=1}^{N_c} \sum_{j=1}^{l_i} c_j^i 
(\psi_{j,\textnormal{glo}}^i - \psi_{j,\textnormal{ms}}^i) \Vert_a^2 \le 
4 E (C_0+1) (1+\Lambda_{\Omega^{\epsilon}}^{-1}) (m+1)^d (1+D_{\Omega^{\epsilon}}) \Vert u - u_{\textnormal{ms}}^n \Vert_a^2.
\]

The proof follows by combining the estimates from Steps 1--3.
\end{proof}

At this point, the convergence analysis of the online stage is complete. 
Theorem \ref{thm:cem:on} shows that the error reduction achieved by the online iteration is governed by three main factors.
First, the offline multiscale space must possess sufficient approximation capability. 
In particular, the local multiscale basis functions should be constructed with a sufficiently large number of oversampling layers and a sufficient number of offline basis functions.
Second, the convergence is influenced by the exponential decay rate of the global online basis functions. 
This indicates that the construction of online basis functions also requires sufficiently large oversampling regions in order to accurately capture the global information.
Third, the error reduction depends on the number of online basis functions added at each enrichment step. 
This quantity is controlled by the adaptivity parameter $\theta$ in Algorithm \ref{alg:cem_online_adap}.
When $\theta = 1$, corresponding to uniform enrichment, this factor disappears from the error estimate, and the associated error contribution is eliminated.

\section{Numerical examples} \label{sec:numericalexamples}

In this section, we present two perforated media to demonstrate the performance of the proposed method. The first medium contains a relatively small number of perforations, whereas the second one consists of a much denser distribution of perforations; see Figure \ref{fig:media}.

For both test cases, the Lam\'e coefficients are set to $\lambda=1$ and $\mu=0.5$, corresponding to a homogeneous isotropic material. The fine-grid mesh is generated using triangular elements with mesh size $h=1/200$.

To evaluate the accuracy of CEM-GMsFEM, we consider the relative $L^2$ error and the relative energy error:
\[
e_{L^2} \coloneqq \frac{\Vert u_h-u_{\textnormal{ms}} \Vert}{\Vert u_h \Vert} , \quad
e_{H^1} \coloneqq \frac{\Vert u_h-u_{\textnormal{ms}} \Vert_{a}}{\Vert u_h \Vert_{a}}.
\]
Here, $u_h$ denotes the reference fine-grid solution and $u_{\mathrm{ms}}$ denotes the corresponding multiscale solution.

\begin{figure}[htbp]
\centering
\includegraphics[scale=0.4]{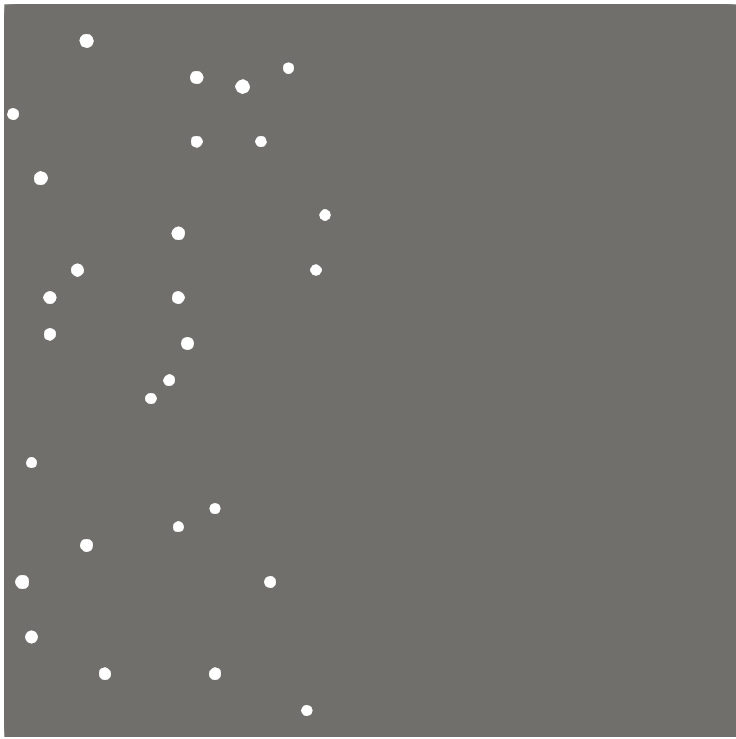}
\includegraphics[scale=0.4]{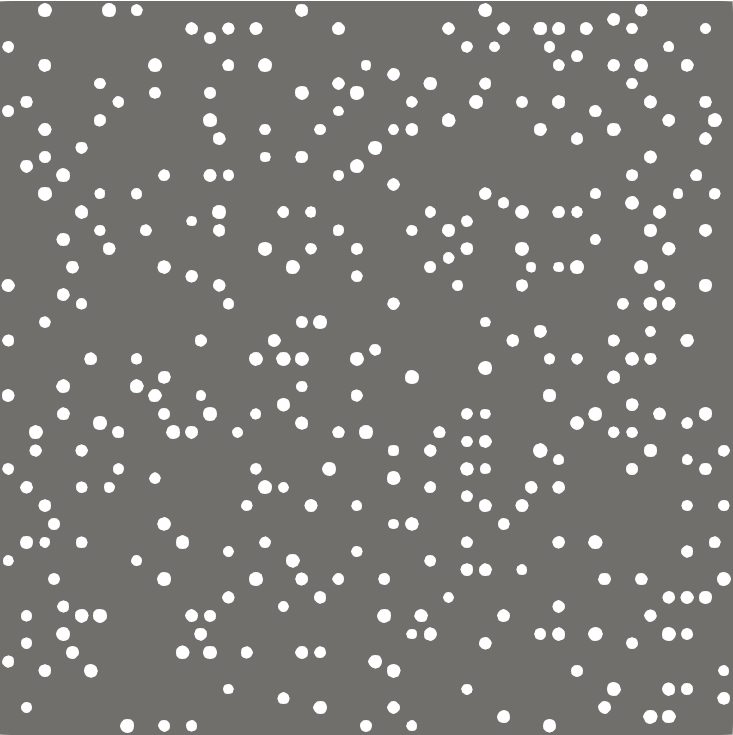}
\caption{Left: first media for Case 1. Right: second media for Case 2.}
\label{fig:media}
\end{figure}

\subsection{Case 1}

In this example, we investigate the performance of CEM-GMsFEM in the first perforated medium shown in Figure \ref{fig:media}. The corresponding source term is displayed in Figure \ref{fig:source_case1}.

For the offline stage, we employ four auxiliary basis functions in each coarse block and consequently construct four multiscale basis functions per coarse block. The convergence results for the constraint and relaxed formulations are shown in Figures \ref{fig:case1_error_c} and \ref{fig:case1_error_r}, respectively. As expected from the theoretical analysis, the approximation error decreases as the coarse mesh size $H$ is refined, provided that a sufficient number of oversampling layers is used. The numerical results reported in Table \ref{tab:case1_error} further confirm this observation. Increasing the number of oversampling layers improves the accuracy; however, the error eventually becomes dominated by the coarse-grid discretization error. A comparison between the reference solution and the multiscale solution is presented in Figure \ref{fig:uuhplot_case1}, demonstrating that the offline multiscale space provides an accurate approximation of the fine-scale solution.

For the online stage, we again use four basis functions in each coarse block and fix the coarse mesh size to $H=1/20$. Unless otherwise specified, three oversampling layers ($m=3$) are employed for each coarse neighborhood. In the adaptive enrichment procedure, we set $\theta=0.1$ and perform five online enrichment iterations, i.e., $\texttt{Iter}=5$. The corresponding results are reported in Table \ref{tab:case1_error_online}. A significant reduction in both the $L^2$ and energy errors is observed during the first two enrichment iterations, indicating the effectiveness of the online basis functions in capturing the remaining approximation error.

\begin{figure}[htbp]
\centering
\includegraphics[scale=0.4]{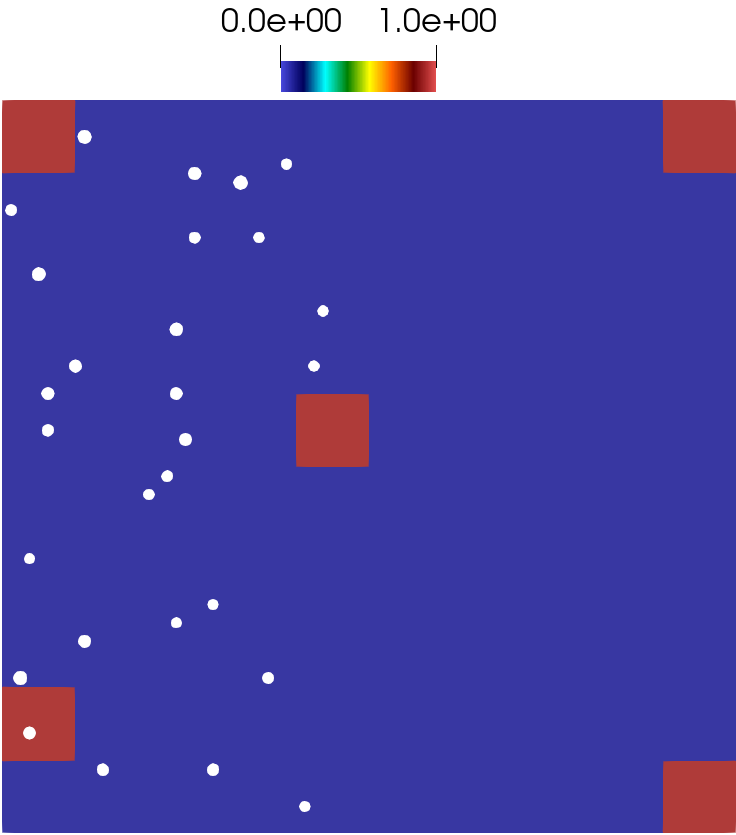}
\includegraphics[scale=0.4]{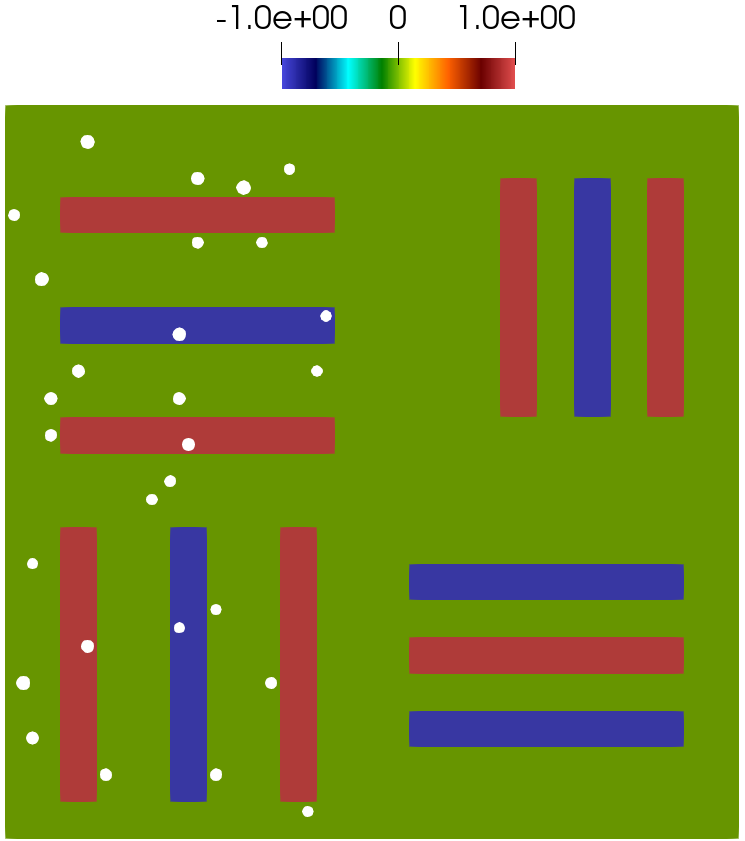}
\caption{Source terms in Case 1. Left: first component. Right: second component.}
\label{fig:source_case1}
\end{figure}

\begin{table}[htbp]
\centering
\begin{tabular}{cccccc}
\hline
\multirow{2}*{$H$} & \multirow{2}*{$m$} & 
\multicolumn{2}{c}{Constraint version} & 
\multicolumn{2}{c}{Relaxed version}
\\ \cline{3-6}
& & $e_{L^2}$ & $e_{H^1}$
& $e_{L^2}$ & $e_{H^1}$ \\ \hline
$1/10$ & 2 & 1.11e-01 & 2.30e-01 & 8.83e-02 & 2.15e-01
\\ \hline
$1/20$ & 4 & 5.66e-03 & 6.89e-02 & 5.51e-03 & 6.83e-02
\\ \hline
$1/40$ & 6 & 1.40e-04 & 5.10e-03 & 1.36e-04 & 5.00e-03
\\ \hline
\end{tabular}
\caption{Numerical errors of CEM-GMsFEM in Case 1 for various coarse mesh sizes $H$.}
\label{tab:case1_error}
\end{table}

\begin{figure}[htbp]
\centering
\includegraphics[scale=0.42]{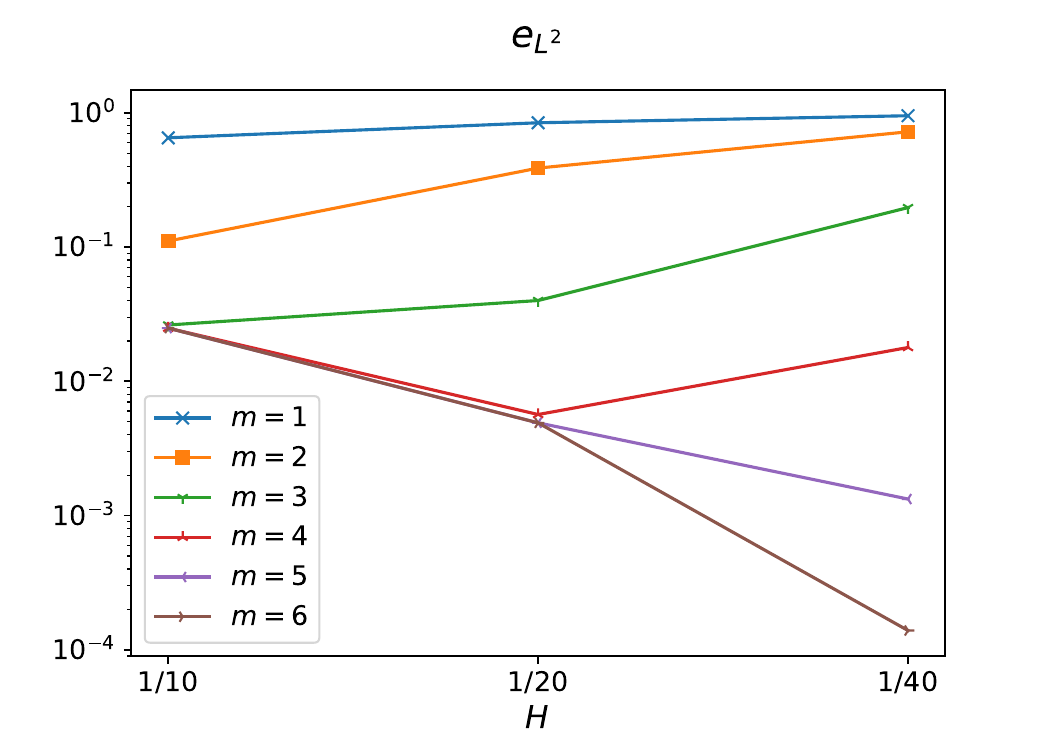}
\includegraphics[scale=0.42]{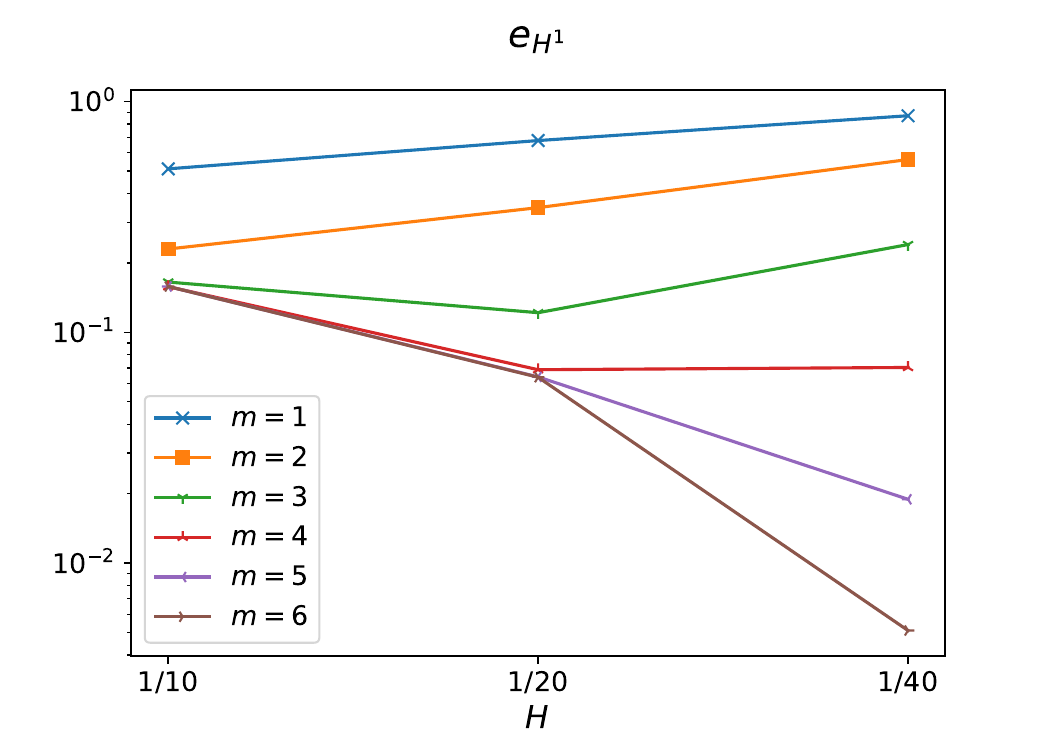}
\caption{Numerical errors obtained using the constraint multiscale basis in the offline stage for various coarse mesh sizes in Case 1.}
\label{fig:case1_error_c}
\end{figure}

\begin{figure}[htbp]
\centering
\includegraphics[scale=0.42]{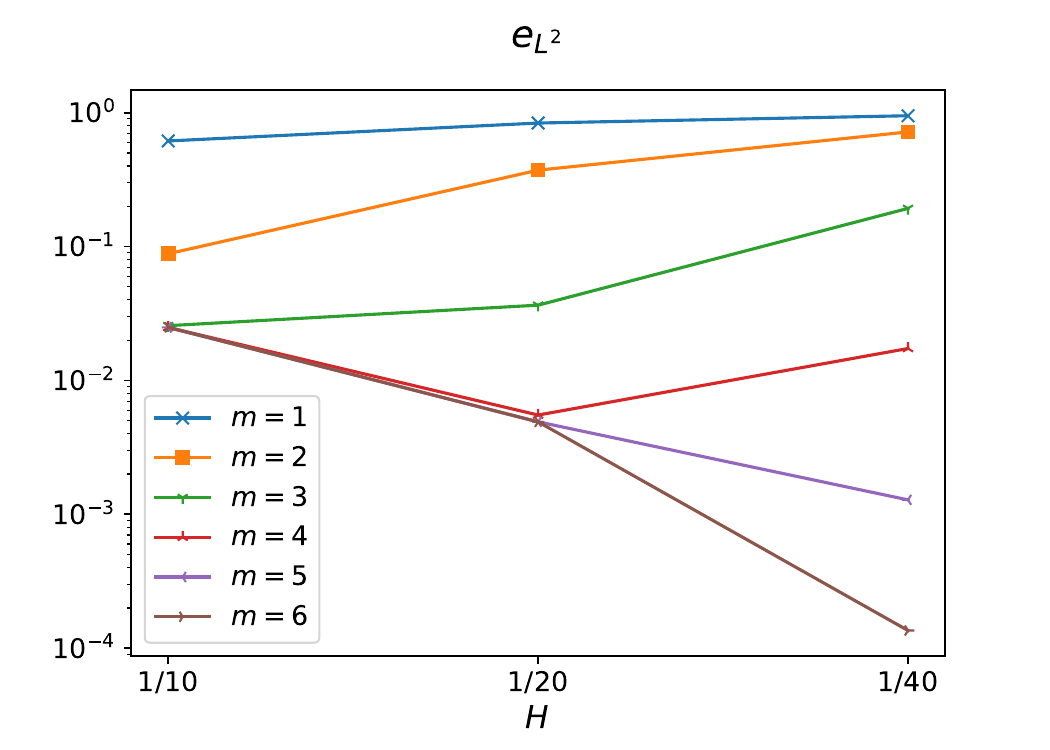}
\includegraphics[scale=0.42]{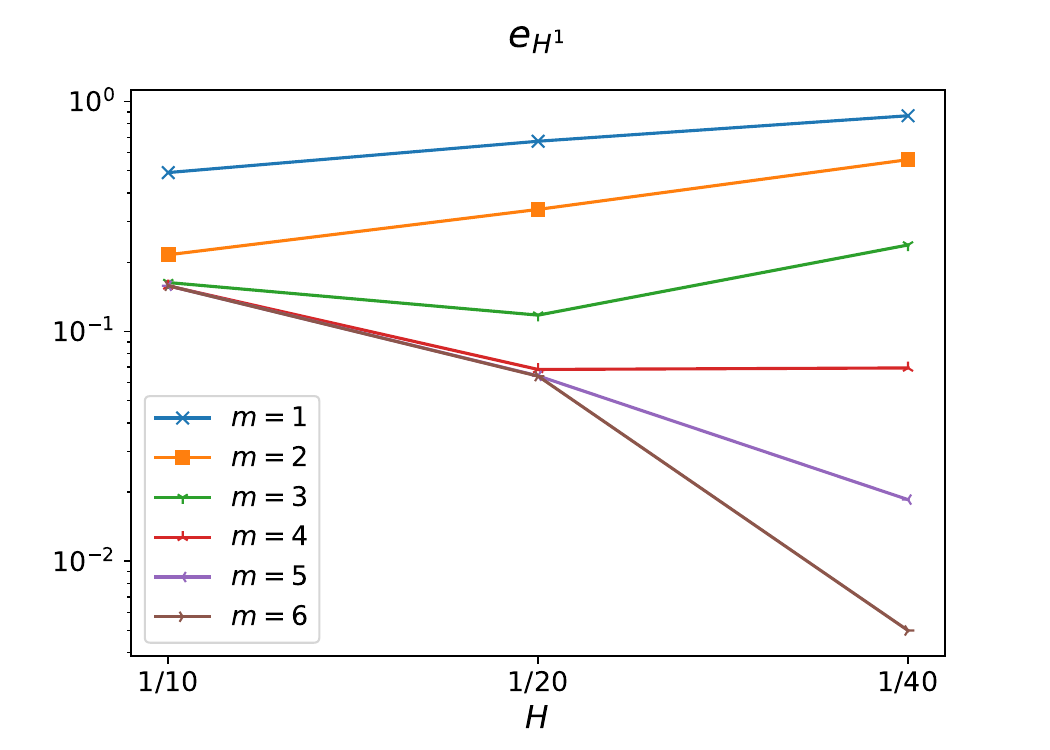}
\caption{Numerical errors obtained using the relaxed multiscale basis in the offline stage for various coarse mesh sizes in Case 1.}
\label{fig:case1_error_r}
\end{figure}

\begin{figure}[htbp]
\centering
\begin{subfigure}[b]{0.29\linewidth}
\includegraphics[width=\linewidth]{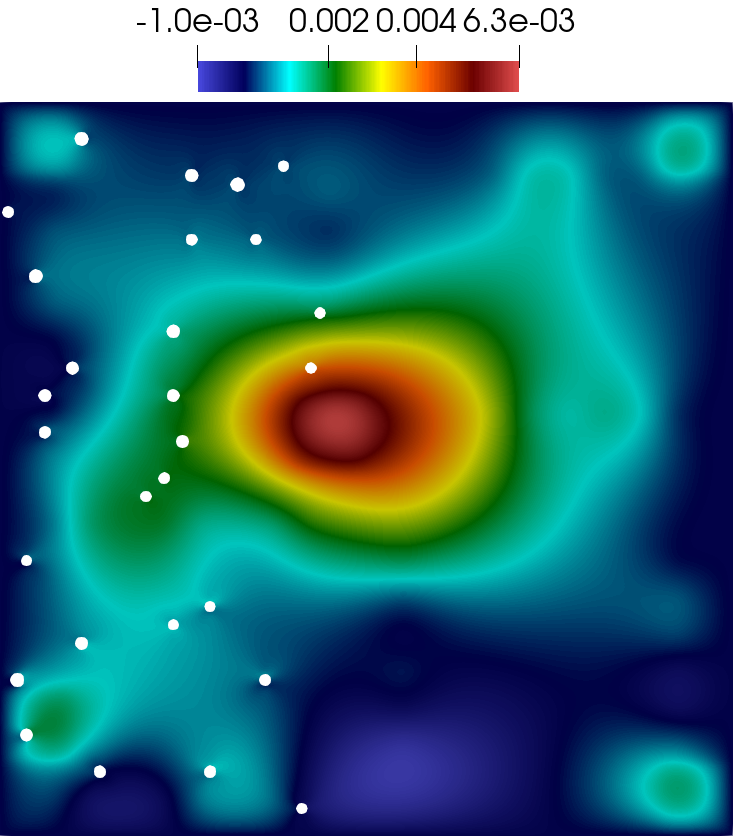}
\caption{}
\end{subfigure}
\begin{subfigure}[b]{0.29\linewidth}
\includegraphics[width=\linewidth]{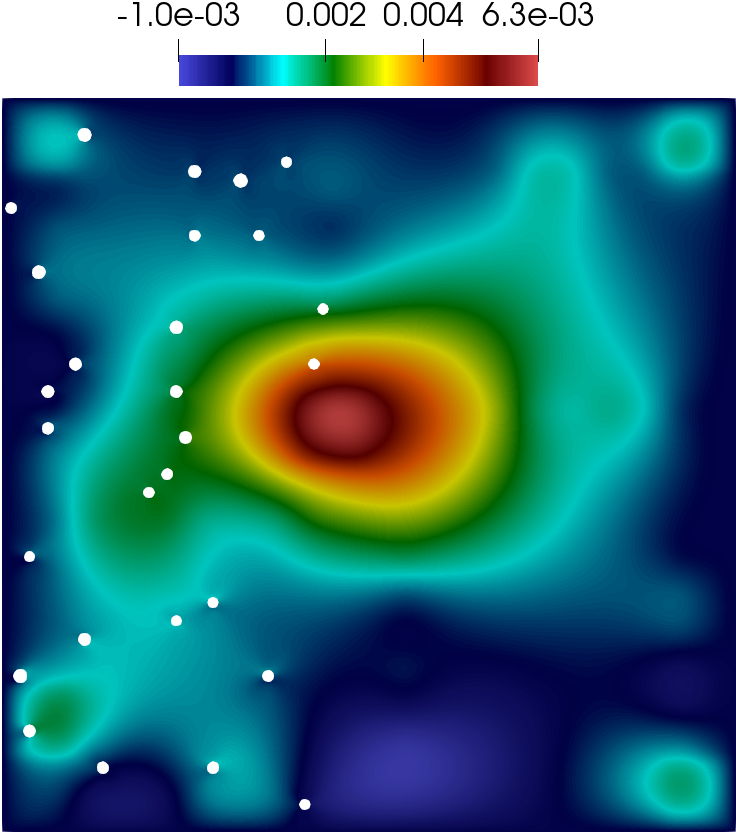}
\caption{}
\end{subfigure}
\begin{subfigure}[b]{0.29\linewidth}
\includegraphics[width=\linewidth]{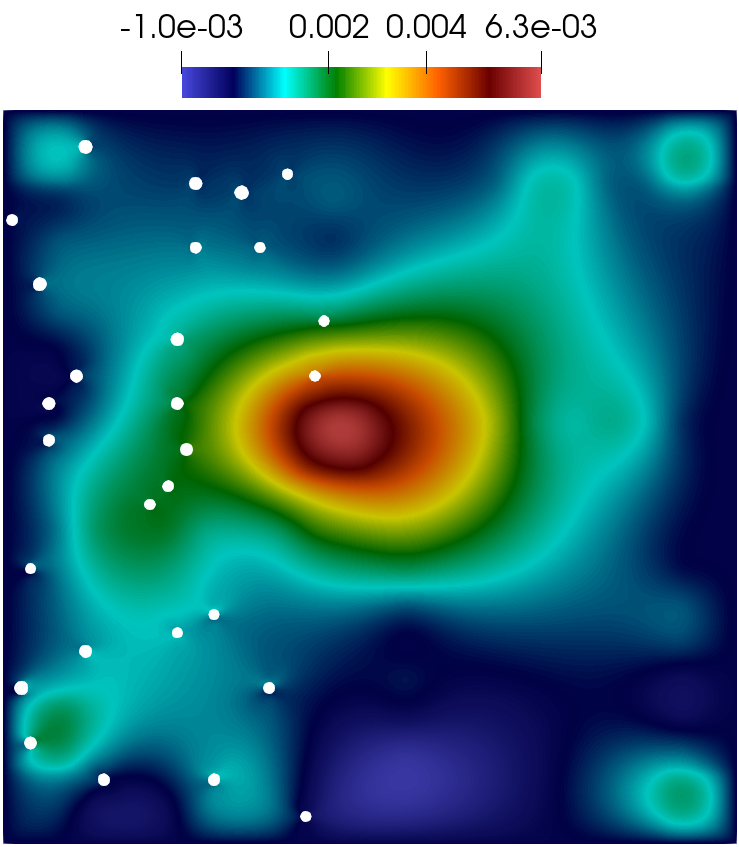}
\caption{}
\end{subfigure}
\\
\begin{subfigure}[b]{0.29\linewidth}
\includegraphics[width=\linewidth]{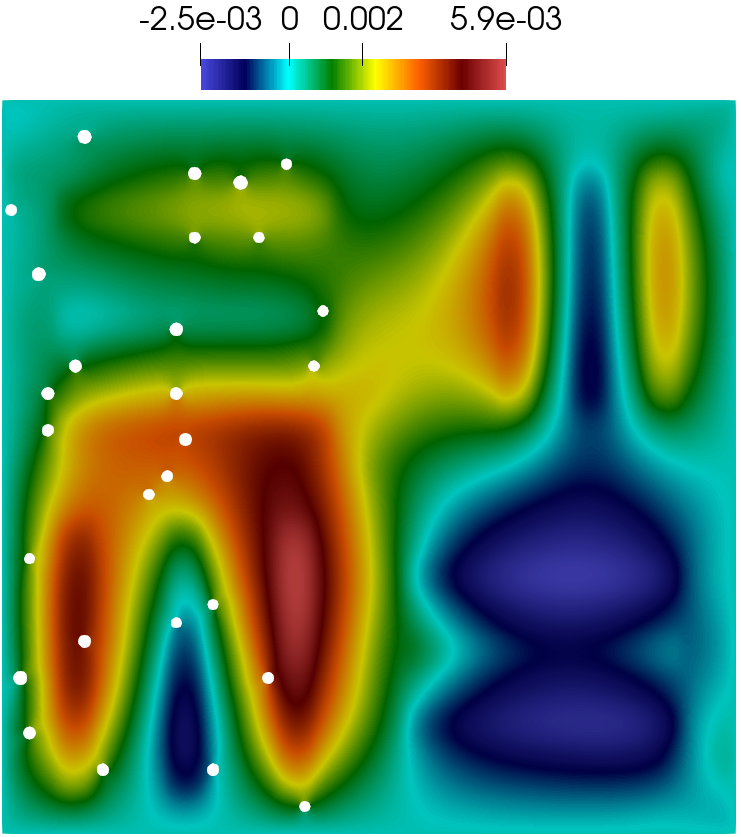}
\caption{}
\end{subfigure}
\begin{subfigure}[b]{0.29\linewidth}
\includegraphics[width=\linewidth]{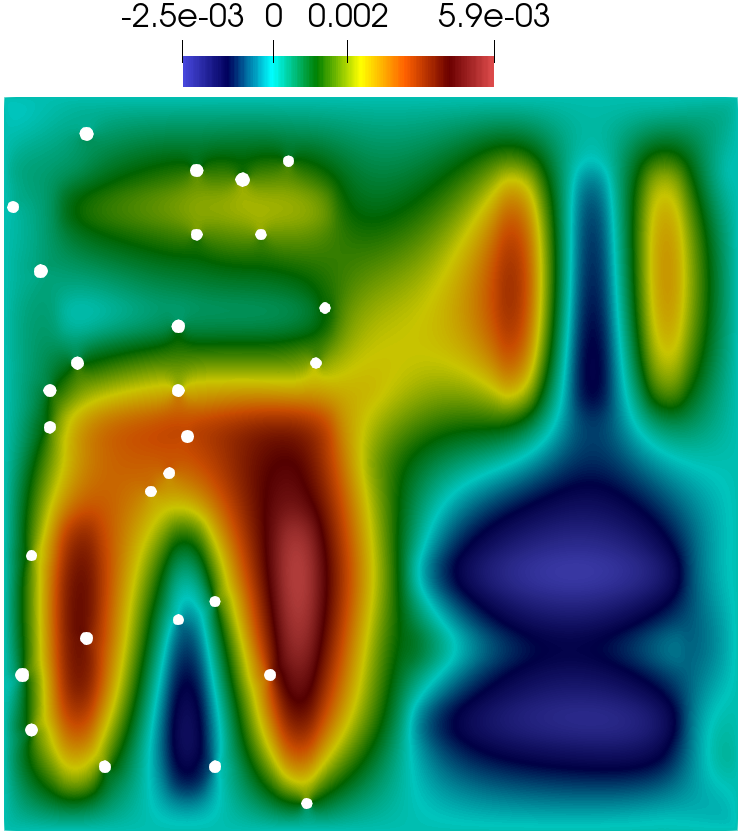}
\caption{}
\end{subfigure}
\begin{subfigure}[b]{0.29\linewidth}
\includegraphics[width=\linewidth]{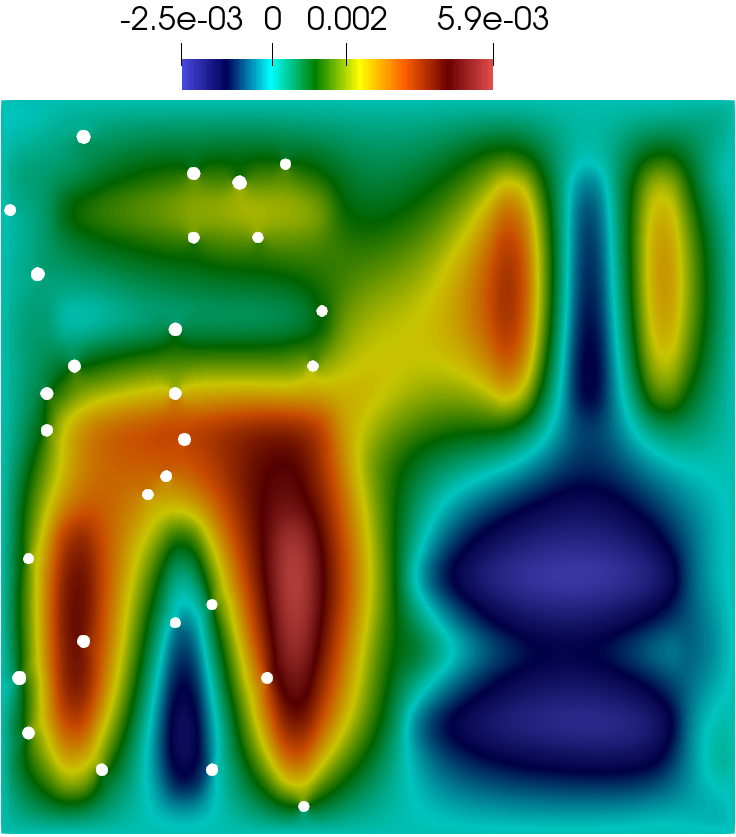}
\caption{}
\end{subfigure}
\caption{Comparison of the reference and offline multiscale solutions in Case 1 with coarse mesh size $H=1/40$ and six oversampling layers. (a)--(c): first component; (d)--(f): second component. In each row, the left panel shows the reference solution, the center panel shows the multiscale solution obtained using the constraint multiscale basis, and the right panel shows the multiscale solution obtained using the relaxed multiscale basis.}
\label{fig:uuhplot_case1}
\end{figure}

\begin{table}[htbp]
\centering
\begin{tabular}{ccccccc}
\hline
\multirow{2}*{\texttt{Iter}} & 
\multicolumn{3}{c}{Uniform enrichment ($\theta=1$)} & 
\multicolumn{3}{c}{Adaptive enrichment ($\theta=0.1$)}
\\ \cline{2-7}
& DOF & $e_{L^2}$ & $e_{H^1}$
& DOF & $e_{L^2}$ & $e_{H^1}$ \\ \hline
0 & 1600 & 3.65e-02 & 1.18e-01 & 1600 & 3.65e-02 & 1.18e-01 \\
1 & 1961 & 5.68e-04 & 6.61e-03 & 1825 & 3.86e-03 & 3.77e-02 \\
2 & 2322 & 9.68e-05 & 2.20e-03 & 2024 & 6.20e-04 & 9.39e-03 \\
3 & 2683 & 4.43e-05 & 1.15e-03 & 2242 & 1.60e-04 & 3.20e-03 \\
4 & 3044 & 1.99e-05 & 6.37e-04 & 2459 & 6.03e-05 & 1.41e-03
\\ \hline
\end{tabular}
\caption{Online error decay history for Case 1.}
\label{tab:case1_error_online}
\end{table}

\subsection{Case 2}

In this example, we consider the second perforated medium shown in Figure \ref{fig:media}, which contains a significantly larger number of perforations than the previous case. The corresponding source term is presented in Figure \ref{fig:source_case2}.

For the offline stage, we investigate the effects of the coarse mesh size and the number of oversampling layers on the approximation accuracy. The convergence results for the constraint and relaxed multiscale bases are shown in Figures \ref{fig:case2_error_c} and \ref{fig:case2_error_r}, respectively. As expected, increasing the number of oversampling layers improves the accuracy of the multiscale approximation. Moreover, reducing the coarse mesh size $H$ together with sufficient oversampling leads to a substantial decrease in the error, as illustrated in Table \ref{tab:case2_error}. Similar to the previous example, the accuracy is ultimately limited by the coarse-grid approximation error. Comparisons between the reference solution and the multiscale solution are presented in Figure \ref{fig:uuhplot_case2}, demonstrating that the proposed method accurately captures the fine-scale features of the solution.

We further investigate the influence of the number of auxiliary basis functions. Fixing $H=1/20$ and $k=4$, the corresponding errors for the constraint and relaxed multiscale bases are reported in Figure \ref{fig:case2_error_li}. The results show that the accuracy improves as more basis functions are included, which is consistent with the theoretical analysis.

Finally, we use this medium to assess the performance of the adaptive online enrichment algorithm \ref{alg:cem_online_adap}. Using the same coarse mesh size and oversampling parameter as in the previous example ($H=1/20$ and $m=3$), together with four basis functions in each coarse block, we set $\theta=0.7$ and $\texttt{Iter}=5$. The corresponding results are reported in Table \ref{tab:case2_error_online}. A significant error reduction is achieved during the first two enrichment iterations, demonstrating the effectiveness of the online basis functions in improving the multiscale approximation.

\begin{figure}[htbp]
\centering
\includegraphics[scale=0.4]{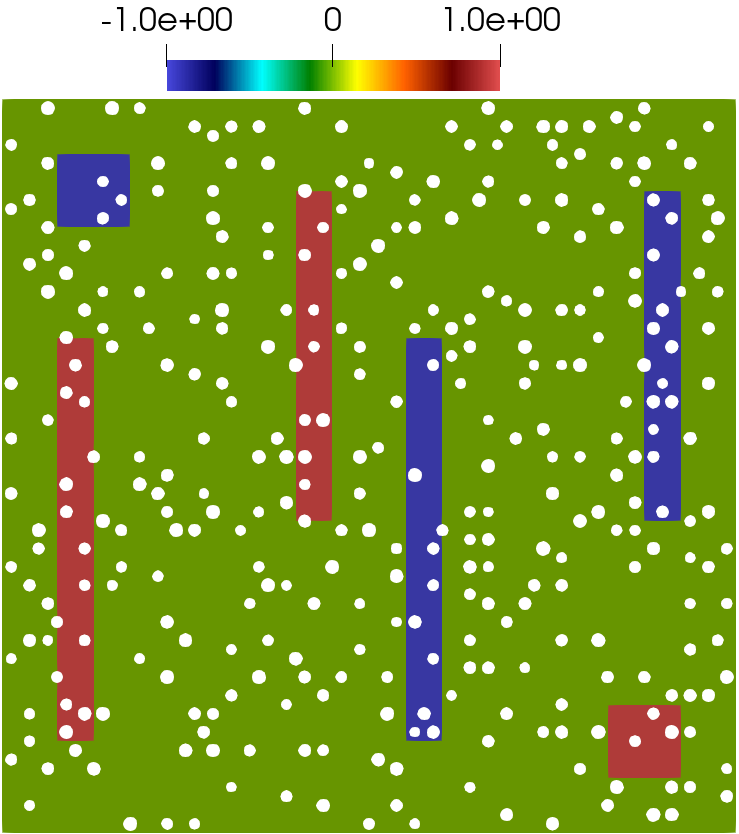}
\includegraphics[scale=0.4]{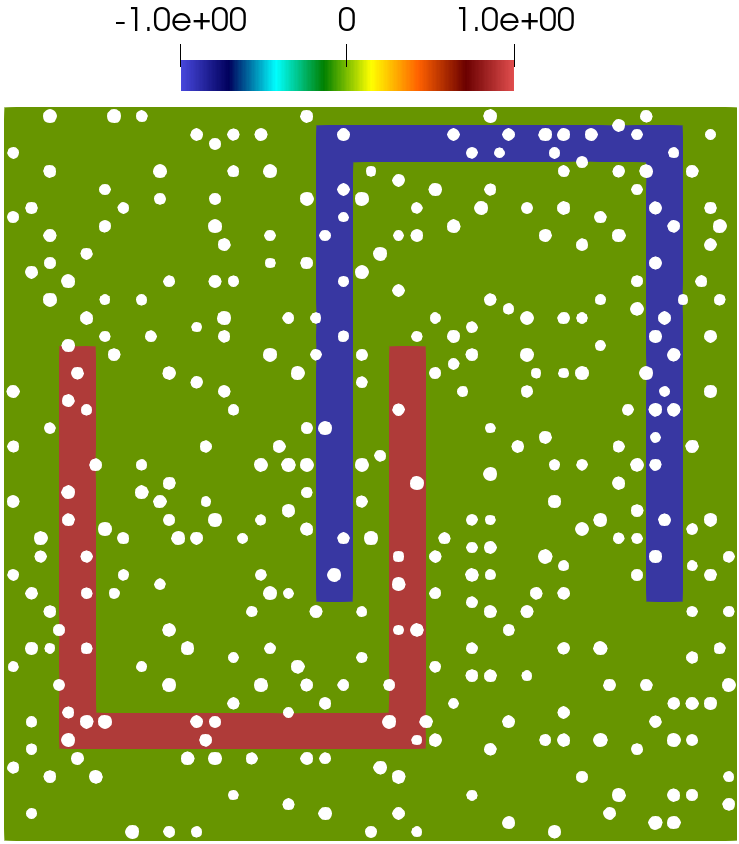}
\caption{Source terms in Case 2. Left: first component. Right: second component.}
\label{fig:source_case2}
\end{figure}

\begin{table}[htbp]
\centering
\begin{tabular}{cccccc}
\hline
\multirow{2}*{$H$} & \multirow{2}*{$m$} & 
\multicolumn{2}{c}{Constraint version} & 
\multicolumn{2}{c}{Relaxed version}
\\ \cline{3-6}
& & $e_{L^2}$ & $e_{H^1}$
& $e_{L^2}$ & $e_{H^1}$ \\ \hline
$1/10$ & 2 & 8.80e-02 & 2.38e-01 & 7.21e-02 & 2.25e-01
\\ \hline
$1/20$ & 4 & 7.13e-03 & 8.01e-02 & 7.11e-03 & 7.99e-02
\\ \hline
$1/40$ & 6 & 4.26e-04 & 1.16e-02 & 4.45e-04 & 1.19e-02
\\ \hline
\end{tabular}
\caption{Numerical errors of CEM-GMsFEM in Case 2 for various coarse mesh sizes $H$.}
\label{tab:case2_error}
\end{table}

\begin{figure}[htbp]
\centering
\includegraphics[scale=0.42]{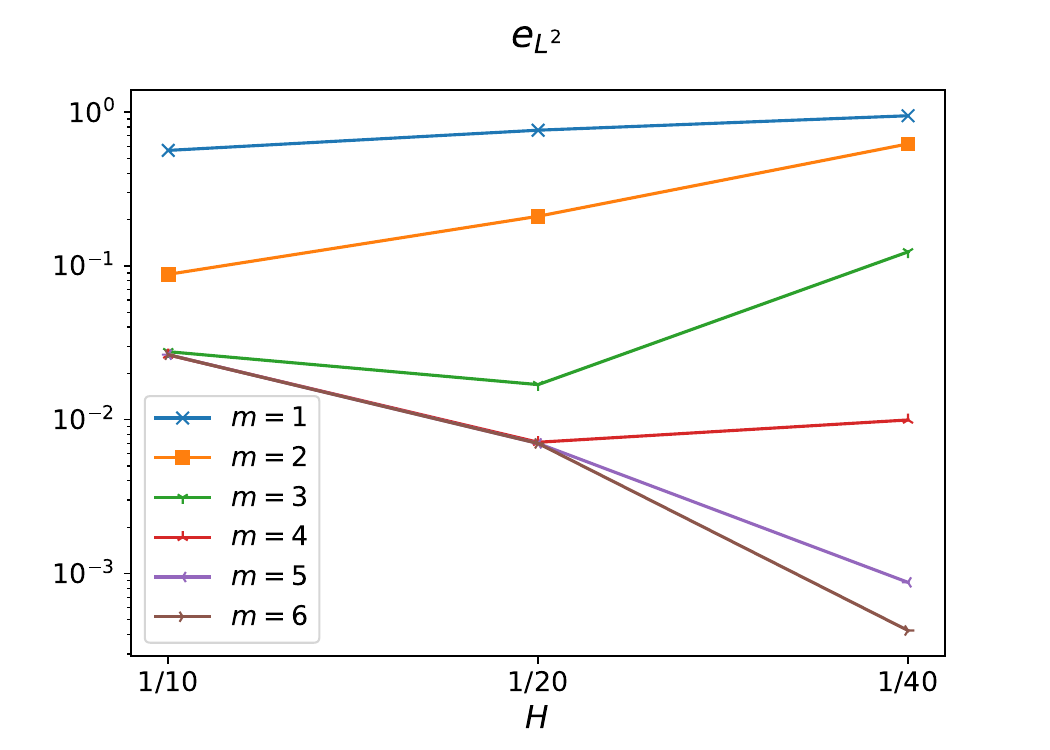}
\includegraphics[scale=0.42]{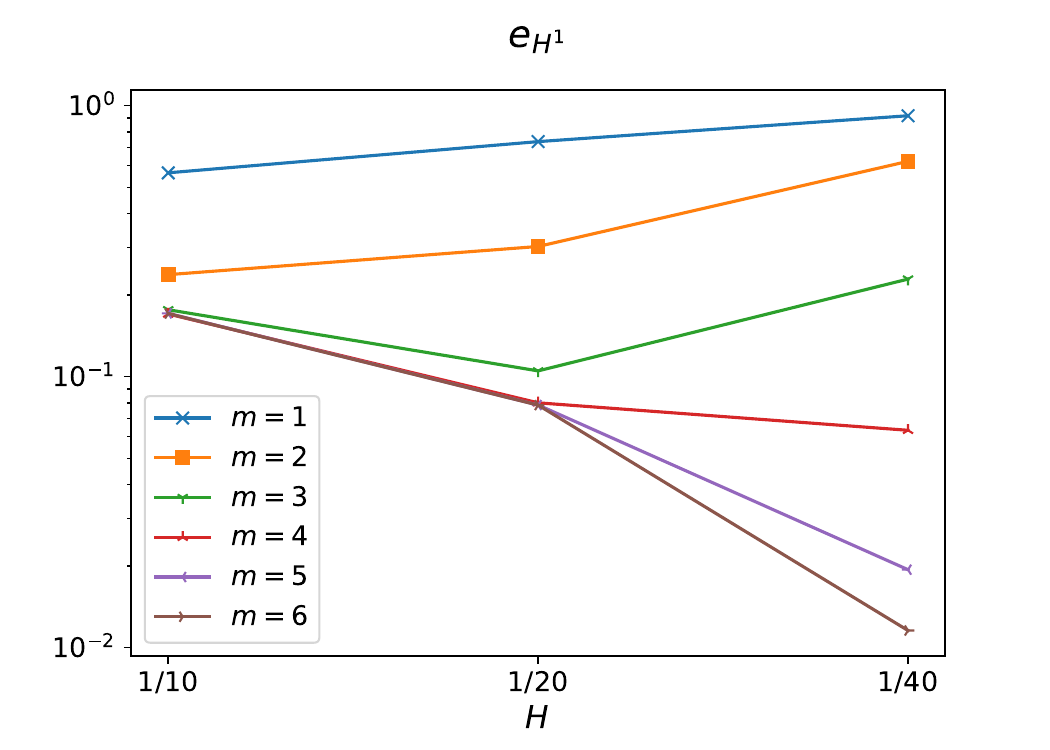}
\caption{Numerical errors obtained using the constraint multiscale basis in the offline stage for various coarse mesh sizes in Case 2.}
\label{fig:case2_error_c}
\end{figure}

\begin{figure}[htbp]
\centering
\includegraphics[scale=0.42]{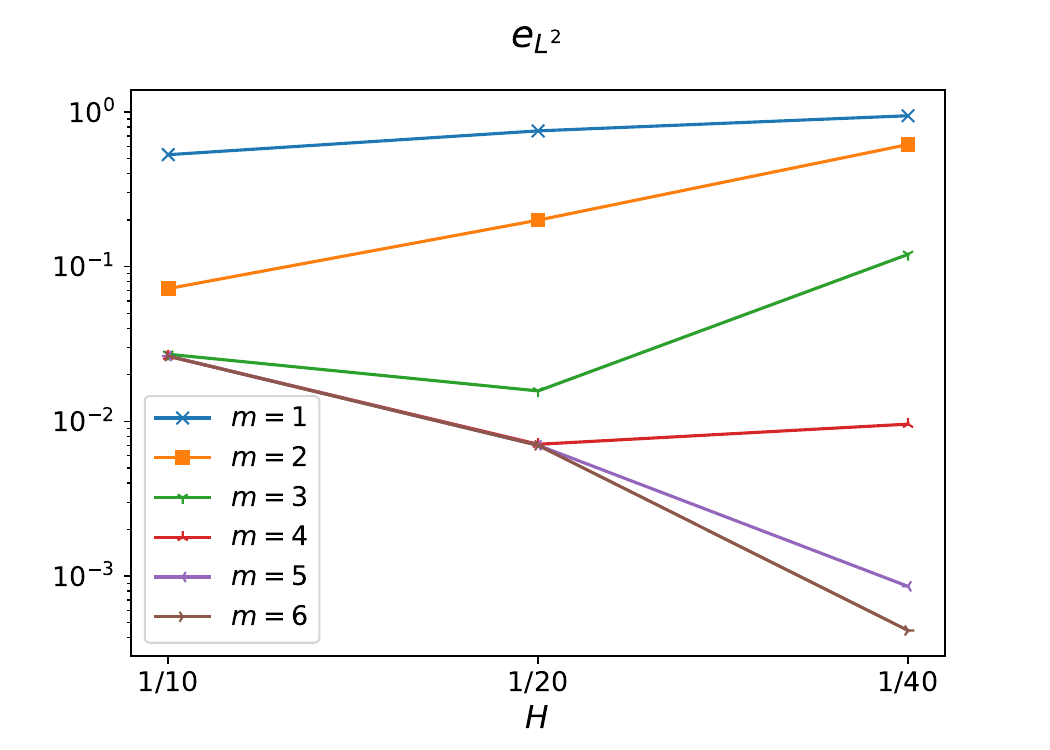}
\includegraphics[scale=0.42]{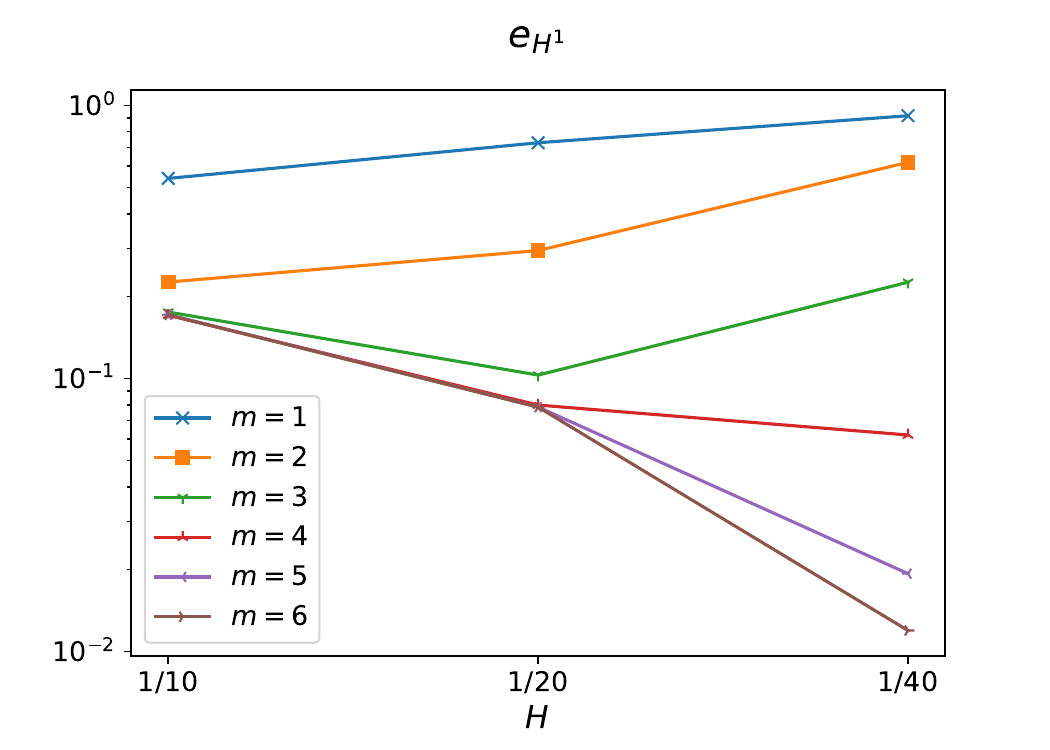}
\caption{Numerical errors obtained using the relaxed multiscale basis in the offline stage for various coarse mesh sizes in Case 2.}
\label{fig:case2_error_r}
\end{figure}

\begin{figure}[htbp]
\centering
\begin{subfigure}[b]{0.29\linewidth}
\includegraphics[width=\linewidth]{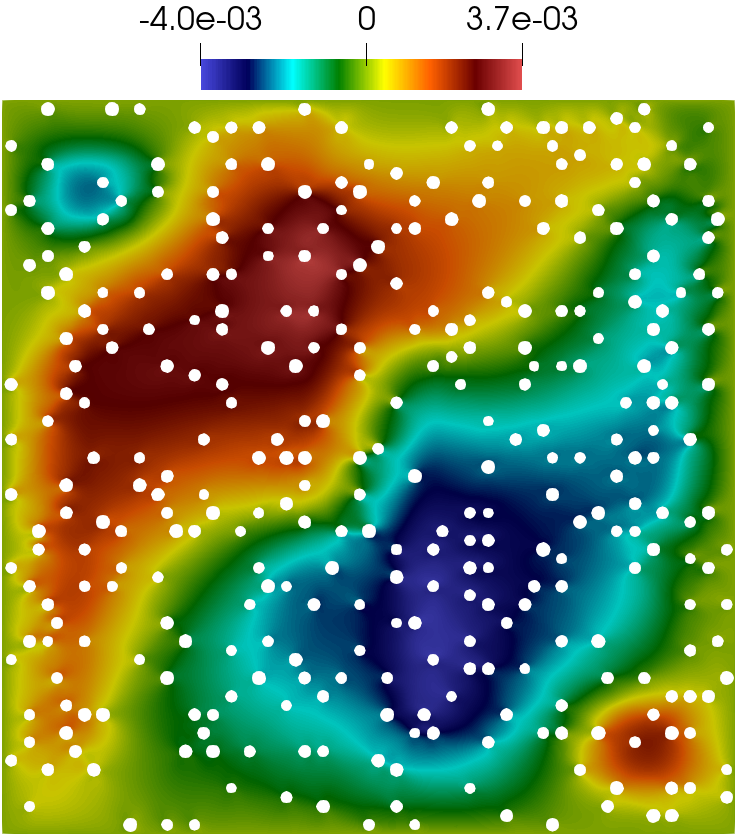}
\caption{}
\end{subfigure}
\begin{subfigure}[b]{0.29\linewidth}
\includegraphics[width=\linewidth]{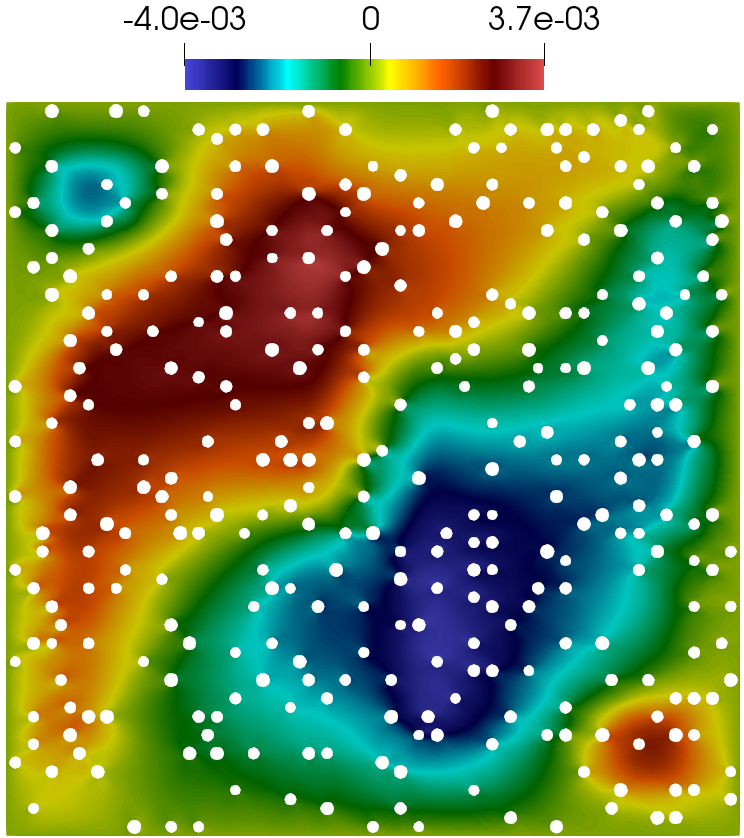}
\caption{}
\end{subfigure}
\begin{subfigure}[b]{0.29\linewidth}
\includegraphics[width=\linewidth]{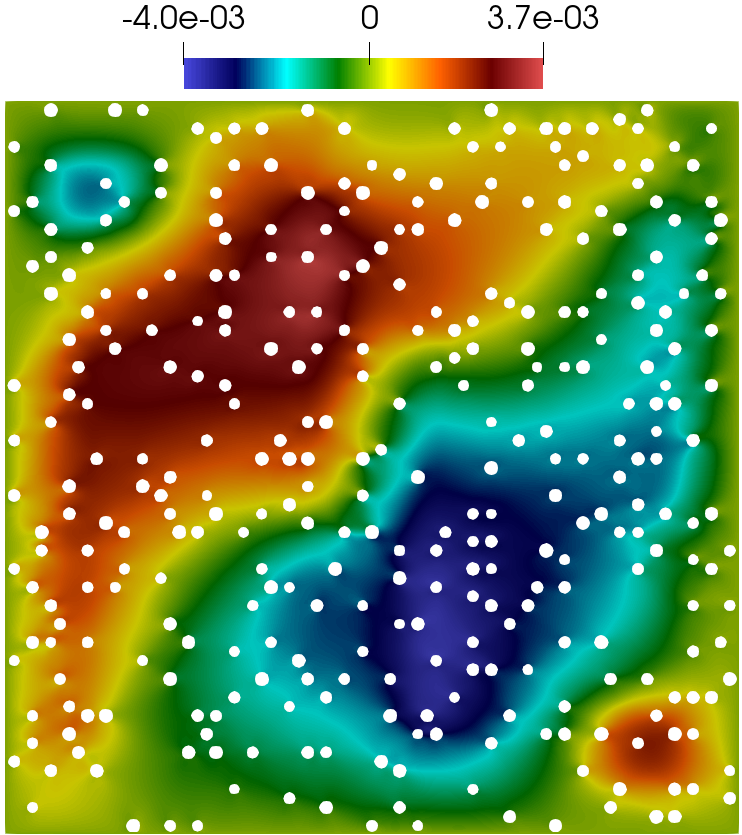}
\caption{}
\end{subfigure}
\\
\begin{subfigure}[b]{0.29\linewidth}
\includegraphics[width=\linewidth]{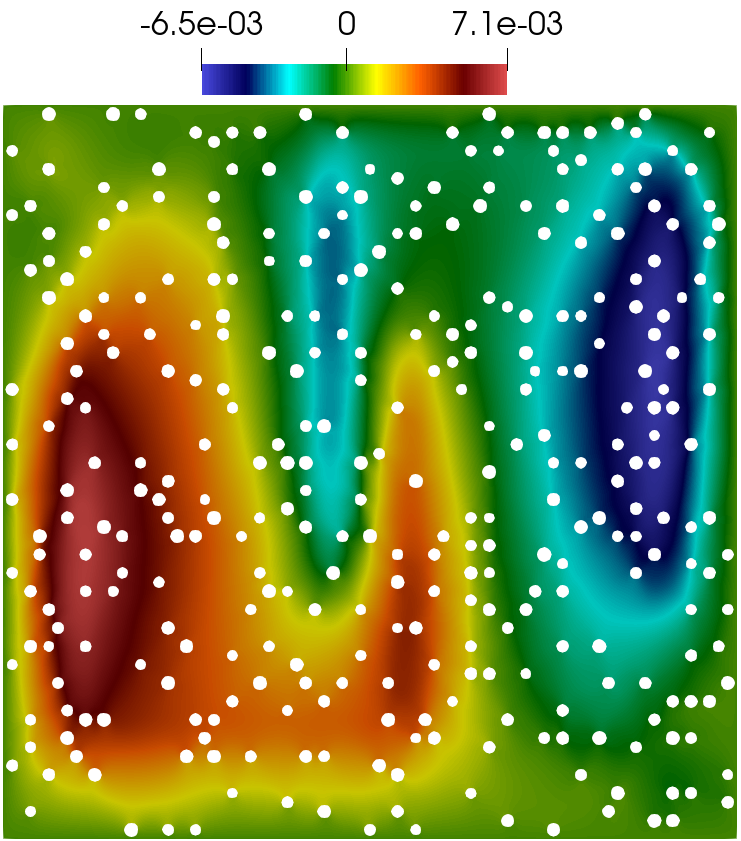}
\caption{}
\end{subfigure}
\begin{subfigure}[b]{0.29\linewidth}
\includegraphics[width=\linewidth]{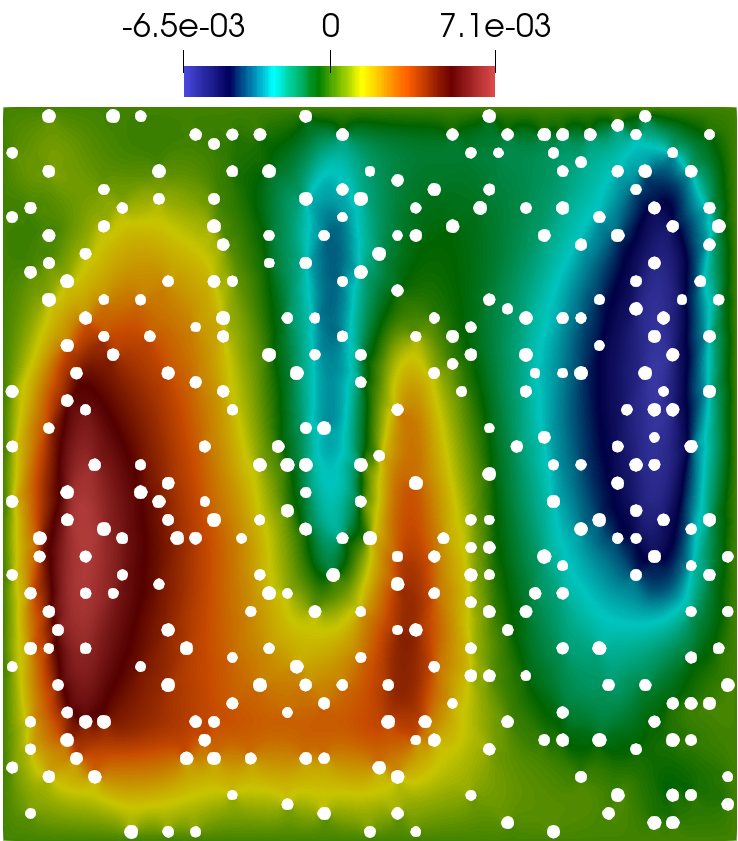}
\caption{}
\end{subfigure}
\begin{subfigure}[b]{0.29\linewidth}
\includegraphics[width=\linewidth]{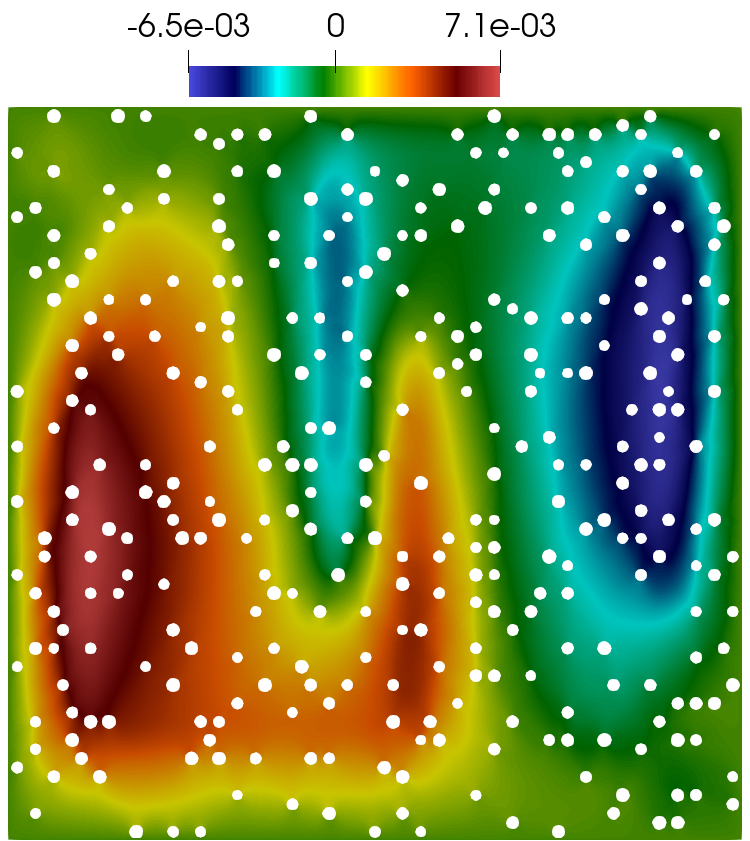}
\caption{}
\end{subfigure}
\caption{Comparison of the reference and offline multiscale solutions in Case 2 with coarse mesh size $H=1/40$ and six oversampling layers. (a)--(c): first component; (d)--(f): second component. In each row, the left panel shows the reference solution, the center panel shows the multiscale solution obtained using the constraint multiscale basis, and the right panel shows the multiscale solution obtained using the relaxed multiscale basis.}
\label{fig:uuhplot_case2}
\end{figure}

\begin{figure}[htbp]
\centering
\includegraphics[scale=0.4]{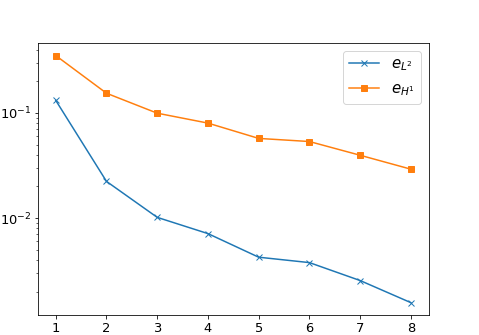}
\includegraphics[scale=0.4]{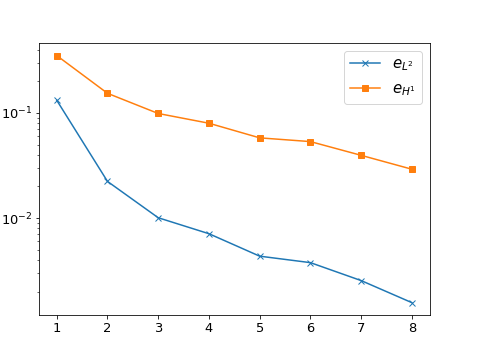}
\caption{Numerical errors for different numbers of offline basis functions in Case 2.}
\label{fig:case2_error_li}
\end{figure}

\begin{table}[htbp]
\centering
\begin{tabular}{ccccccc}
\hline
\multirow{2}*{\texttt{Iter}} & 
\multicolumn{3}{c}{Uniform enrichment} ($\theta=1$)& 
\multicolumn{3}{c}{Adaptive enrichment ($\theta=0.7$)}
\\ \cline{2-7}
& DOF & $e_{L^2}$ & $e_{H^1}$
& DOF & $e_{L^2}$ & $e_{H^1}$ \\ \hline
0 & 1600 & 1.57e-02 & 1.03e-01 & 1600 & 1.57e-02 & 1.03e-01 \\
1 & 1961 & 5.42e-04 & 9.44e-03 & 1641 & 1.09e-02 & 8.03e-02 \\
2 & 2322 & 1.69e-05 & 3.75e-03 & 1695 & 7.16e-03 & 6.58e-02 \\
3 & 2683 & 6.99e-05 & 1.90e-03 & 1750 & 5.28e-03 & 5.17e-02 \\
4 & 3044 & 3.23e-05 & 1.05e-04 & 1802 & 3.74e-03 & 4.18e-02 
\\ \hline
\end{tabular}
\caption{Online error decay history for Case 2.}
\label{tab:case2_error_online}
\end{table}

\section{Conclusions} \label{sec:conclusions}

In this paper, we developed a CEM-GMsFEM framework for the efficient simulation of linear elasticity problems in heterogeneous perforated domains. The proposed method employs an offline--online decomposition. In the offline stage, localized multiscale basis functions are constructed using only the differential operator and the geometric information of the perforated medium. In the online stage, residual-driven basis functions are adaptively generated to incorporate source-term information and further improve the accuracy of the multiscale approximation.
We established convergence results for both the offline and online stages. In particular, the analysis shows that the localization errors of the multiscale basis functions decay exponentially with respect to the oversampling layers. Moreover, we proved that the oversampling regions required for the online basis functions can be determined using only local geometric information, which reduces the computational cost while preserving the convergence properties of the method.
Numerical experiments on two perforated media with different geometric characteristics demonstrate the accuracy and efficiency of the proposed approach. The results confirm that the offline multiscale space provides a reliable coarse-scale approximation and that the online enrichment procedure can significantly accelerate convergence with only a small number of additional basis functions.
The online basis functions are driven by global (residual) information, which enables rapid error reduction without further mesh refinement. They are therefore particularly beneficial when higher accuracy is desired without refining the coarse mesh size. In addition, the online procedure is expected to be especially useful for nonlinear problems and other challenging multiscale settings where local offline spaces alone may be insufficient to capture solution-dependent features, which will be investigated in future work.

\section*{Acknowledgements}
Wei Xie and Yin Yang are supported by the National Natural Science Foundation of China Project (No. 12571469), Scientific Research Innovation Capability Support Project for Young Faculty of China (No. SRICSPYF-BS2025132), the Project of Scientific Research Fund of the Hunan Provincial Science and Technology Department (No. 2024JJ1008).
Eric Chung is partially supported by the Hong Kong RGC General Research Fund (Projects: 14305423 and 14305624).
Yunqing Huang is partially supported by the 111 Project (No. D23017), Major Scientific and Technological Innovation Plat form Project of Hunan Province (2024JC1003), and Program for Science and Technology Innovative Research Team in Higher Educational Institutions of Hunan Province of China.

\bibliographystyle{abbrv}
\bibliography{refs}
\end{document}